\documentclass[12pt,reqno]{amsart}
  
 \usepackage{comment}
\usepackage{tikz}
\usepackage{mathrsfs}
\usepackage{latexsym}
\usepackage{graphicx}
\usepackage{amssymb}
 \graphicspath{{Figures/}}
 \usepackage[show]{ed}

\renewcommand{\epsilon}{\varepsilon}

\newcommand{\R}{{\mathbb R}}

\newcommand{\Z}{{\mathbb Z}}

\newcommand{\supp}{{\operatorname{supp\,}}}
\renewcommand{\phi}{\varphi}

\newcommand{\ep}{\varepsilon}

\newtheorem{theo}{{\sc Theorem}}
\newtheorem{maintheo}{{\sc Theorem}}
\newtheorem{cor}{{\sc Corollary}}[section]

\newtheorem{lem}[cor]{{\sc Lemma}}

\newtheorem{prop}[cor]{{\sc Proposition}}
\numberwithin{equation}{section}

\newenvironment{rem}{\medskip\noindent{\it Remark:\/} }{\medskip}

\newtheorem{defn}[theo]{{\sc Definition}}

\usepackage{hyperref}

\title[Trace estimates and improved pointwise bounds]{Trace estimates and improved pointwise bounds for joint Eigenfunctions}

\author{Xianchao Wu and Xiao Xiao}
\address{School of Mathematics and Statistics, Wuhan University of Technology, Wuhan, Hubei, China}
\email{xianchao.wu@whut.edu.cn } 
\address{Department of Mathematics and Statistics, McGill University, Montr\'eal, Quebec, Canada}
\email{xiao.xiao5@mail.mcgill.ca}

\date{}

\begin{document}

\begin{abstract}
For $L^2$-normalized joint eigenfunctions in a quantum integrable system, \cite{GT2020} gave polynomial improvements over the standard H\"omander bounds for typical points. In this paper, we improve their result by establishing a sharp bound of $h^{\frac{-n+k+1}2}$ for the points satisfying a {\em rank $k$} non-degeneracy condition. 
\end{abstract}

\maketitle

\section{Introduction}\label{section1}

Let $(M,g)$ be a compact Riemannian manifold without boundary. By basic functional analysis, the Laplace-Beltrami operator $\Delta_g:=\textit{div}_g\circ\nabla_g$ has discrete spectrum $\{\lambda_j\}_{j=0,1,...}$. The corresponding eigenfunctions are $u_j$ satisfying
$$
-\Delta_gu_j=\lambda_ju_j.
$$
We opt to use the semiclassical notation. Let $h_j:=\lambda_j^{-\frac{1}{2}}$, and omitting the subscript $j$, the above equation becomes
\begin{equation}\label{1}
    (-h^2\Delta_g-1)u_h=0.
\end{equation}
The principal symbol of the operator $P(h)=-h^2\Delta_g-1$ is 
\begin{equation}\label{2}
p(x,\xi)=|\xi|_{g(x)}^2-1.    
\end{equation}
The symbol $p(x,\xi)$ is a smooth function on $T^*M$ and can be viewed as a Hamiltonian. The associated Hamiltonian flow is exactly the lifted geodesic flow on $T^*M$. (Under the standard projection, this flow is the geodesic flow on $M$.) By the quantum-classical correspondence, (\ref{1}) and (\ref{2}) should influence each other in the following sense: The dynamical properties of the geodesic flow (\ref{2}) of $(M,g)$ influence the asymptotic properties of $u_h$ in (\ref{1}) as $h\to 0^+$.

There has been an perennial endeavor to understand the concentration of eigenfunctions. A compelling question is how spiky can a sequence of eigenfunction be: Seeking for an $L^{\infty}$ bound of $L^2$ normalized $u_h$ in terms of $h$. 

Beginning in the 1950's, in a series of works by Levitan \cite{Lev}, Avakumovi\'c \cite{Ava}, and H\"ormander \cite{Hor}, the following estimate is obtained (known as the H\"ormander bound):
\begin{equation}\label{known}
\|u_h\|_{L^\infty}=O(h^{\frac{1-n}2}),
\end{equation}
as $h\to 0$ and \eqref{known} is saturated on the round sphere. This bound was improved to $o(h^{\frac{1-n}2})$ by Safarov, Sogge, Toth, Zelditch and Galkowski \cite{Saf88, SZ02, STZ11, SZ16a, SZ16b, GT17, Gal19} under various dynamical assumptions at $x$. When $(M,g)$ has no conjugate points, a quantitative improvement
\begin{equation}\label{know1}
\|u_h\|_{L^\infty}=O(h^{\frac{1-n}2}/\sqrt{|\ln h|}),
\end{equation}
as $h\to 0$, has been known since the classical work of B\'erard \cite{Ber77, Bon17, Ran78}. In recent times, Canzani and Galkowski \cite{CG21, CG23} developed the tool of geodesic beams to study the quantitative improvements without global geometric assumptions on $(M,g)$. 

\subsection{Quantum completely integrable (QCI) system}
A semiclassical pseudodifferential operator $P_1(h)$ is called quantum completely integrable (QCI) if there exist functionally independent semiclassical pseudodifferential operators $P_2(h),\cdots,P_n(h)$ such that 
$$
[P_i(h),P_j(h)]=0,\quad i,j=1,\cdots, n.
$$
Given real numbers $E_1(h),\cdots, E_n(h)$, we want to study the $L^2$ normalised functions $u_h$ that solves $P_j(h)u_h=E_j(h)u_h$ for all $j=1,\cdots,n$. We call them joint eigenfunctions corresponding to the joint eigenvalues $E(h):=(E_1(h),\cdots,E_n(h))\in \R^n$ of the system $\hat{\mathcal{P}}:=(P_1(h),\cdots, P_n(h))$.

The principal symbols $p_j\in C^{\infty}(T^*M)$ of each operator $P_j(h)$ give rise to a moment map that defines an associated classical integrable system. The moment map is denoted by
$$
\mathcal{P}:=(p_1,\cdots,p_n):T^*M\to \R^n.
$$

The regular set of $\mathcal{P}$, denoted by $(T^*M)_{reg}$, consists of points $(x,\xi)\in T^*M$ such that
$$
\dim span\{ dp_1(x,\xi),\cdots,dp_n(x,\xi)\}=n.
$$
As a global assumption, the classical system $\mathcal{P}$ is {\textit{Liouville integrable}}, i.e. the set $(T^*M)_{reg}$ is an open dense subset of $T^*M$.

The above construction induces a Lagrangian torus fibration of $T^*M$. If $E=(E_1,\cdots,E_n)$ is a regular value of $\mathcal{P}$, then by the Arnold-Liouville theorem, $\mathcal{P}^{-1}(E)$ consists of a finite disjoint union of Lagrangian tori:
$$
\mathcal{P}^{-1}(E)=\bigcup_{j=1}^N \Lambda_j(E).
$$
Each $\Lambda_j(E)$ is a Lagrangian submanifold of $T^*M$ and is fixed by the torus action of $\mathcal{P}$. In fact, the joint flow 
$$
G_{t}(x,\xi):=\exp (t_1H_{p_1})\circ \cdots\circ\exp(t_nH_{p_n})(x,\xi)
$$
is defined for all $t\in \R^n$ and the Hamiltonian flow $\exp (t_jH_{p_j})$ preserves the level set $\{p_j=E_j\}$. Furthermore, the definition does not depend on the order of composition since we assumed $[P_i,P_j]=0$, which implies $\{p_i,p_j\}=0$.

The singular set $T^*M\setminus(T^*M)_{reg}$ can be view as the degenerate Lagrangian tori.

Galkowski-Toth \cite{GT2020} investigated the pointwise bounds of $u_h$ for the case $P_1(h)$ being a Schr\"odinger operator. More explicitly, when $E_1$ is a regular value of $p_1$ and the system is of {\em Morse type} at point $x$, they established the following bound:
\begin{equation}\label{known2}
|u_h(x)|=O(I_n(h)),
\end{equation}
here $I_2(h)=h^{-\frac{1}4}$, $I_3(h)=h^{-\frac{1}2}|\ln h|^{\frac{1}2}$ and $I_{n\geq 4}(h)=h^{\frac{2-n}2}$.

\subsection{Main results}
One purpose of this paper is to obtain a little $o$ improvement of the pointwise bounds \eqref{known2} under certain non-degeneracy condition. We achieve this in Theorem \ref{main1}. A useful tool is developed along the way (Theorem \ref{trace}), which we call a {\textit{quantitative trace estimate}}. It is of independent interest. 

We say that $p_1$ is of real principal type on the hypersurface $p_1^{-1}(E_1)$: if $E_1>0$ is a regular value of $p_1$, for any $(x,\xi)\in p_1^{-1}(E_1)$, the following inequality holds,
\begin{equation}\label{principal}
\partial_\xi p_1(x,\xi)\neq 0.
\end{equation}
One sets
\begin{equation}\label{loop set}
\mathcal{C}_x:=\{\xi\in T_x^*M:\, p_1(x,\xi)=E_1\}.
\end{equation} 
and define the {\textit{loop set}} to be
\begin{equation}
    \mathcal{L}_x:=\{\xi\in \mathcal{C}_x:\exists t<+\infty,\, \Phi_{t,p_1}(x,\xi)\in \mathcal{C}_x\},
\end{equation}
where $\Phi_{t,p_1}$ is the Hamiltonian flow of $p_1$. If furthermore, a covector loops back in a direction infinitly close to it original direction infinitly many times in the future and the past, we call it a \textit{recurrent direction}. Define the \textit{recurrent set} to be

\begin{equation}\label{recurrent set}
\mathcal{R}_x:=\{\xi\in \mathcal{C}_x:\Big(\bigcap_{T>0}\overline{\bigcup_{t>T}\Phi_{t,p_1}(x,\xi)|_{\mathcal{C}_x}}\Big)\bigcap\Big(\bigcap_{T>0}\overline{\bigcup_{t>T}\Phi_{-t,p_1}(x,\xi)|_{\mathcal{C}_x}}\Big)\}.
\end{equation}
The closure is under the subspace topology from $\mathcal{C}_x$.

It is the omega-limit of the following dynamical system: Define the return-time function $T_x:\mathcal{C}_x\to \R\cup\{\infty\}$, 
$$
T_x(\xi)=\inf_{t>0}\{t:\Phi_{t,p_1}(x,\xi)\in \mathcal{C}_x\}.
$$
Then the return map $\widetilde\Phi_x:=\Phi_{T_x,p_1}:\mathcal{L}_x\to \mathcal{C}_x$ restricted to $\widetilde{\mathcal{L}}_x:=\bigcap_{k\in \Z}\widetilde{\Phi}^k(\mathcal{L}_x)$ is a dynamical system. By (\ref{recurrent set}), $\xi\in \mathcal{R}_x$ if and only if $\xi$ is a limit point of the orbit of $\widetilde\Phi_x(\xi)$. See \cite{STZ11} and \cite{Gal19}.

Note that we have the following inclusion:
$$
\mathcal{R}_x\subset\widetilde{\mathcal{L}}_x\subset\mathcal{L}_x\subset \mathcal{C}_x.
$$

\begin{defn}\label{def}
Suppose that $1\leq k\leq n-1$ and $\Gamma$ is a subset of $\mathcal{C}_x$. We say that $\mathcal{P}=(p_1, \dots, p_n)$ satisfies {\bf rank $k$ condition} at $x\in M$ for $\Gamma$ if
\begin{equation}\label{rk}
    \text{ for any } \xi\in\Gamma,\quad\text{dim span}\{\partial_\xi p_2 ,\,\partial_\xi p_3,\cdots, \partial_\xi p_n\} = k
\end{equation}
as a subspace in $T_{\xi}\mathcal{C}_x$, where $\partial_{\xi}$ denotes the full gradient along $\mathcal{C}_x$.

For brevity, we say $\mathcal{P}|_{\Gamma}$ is rank $k$ at $x$.
\end{defn}

\begin{rem}
    It is noteworthy that similar conditions were used in \cite[Def. 1.2]{EGK24}, under the name “fiber rank $k$ condition”, and in \cite[Thm 0.1 (2)]{Tacy19}, under the name “admissibility condition”. Our formulation is slightly more general in the sense that it specifies the energy level and the subset $\Gamma$ of energy surface of $p_1$. Geometrically, condition (\ref{rk}) means that $\mathcal{P}^{-1}(E)\cap \Gamma$ is a $k$ dimensional submanifold which can be parameterized by $(\xi_{i_1},\dots, \xi_{i_k})$.
\end{rem}

From now on, we always suppose that $\hat{\mathcal{P}}=\{\sqrt{-h^2\Delta},P_2(h),\dots, P_n(h)\}$ is a QCI system on a smooth, compact manifold without boundary, $(M,g)$. Let $\{u_h\}$ be the $L^2$-normalized joint eigenfunctions.
As a key ingredient of our paper, we can state our first theorem,
\begin{maintheo}\label{trace}
Assume $A(x, hD)\in \Psi^0(M)$ with symbol $a=\sum_{j=0}^\infty a_j h^j$. If $\mathcal{P}|_\Gamma$ is rank $k$ at $x$ with $\Gamma=\mathcal{C}_x\cap \supp a$, then there exists a positive constant $h_0$ so that for $h\in (0, h_0]$, 
\begin{equation}\label{main cor eq}
|A(x,hD) u_{h}|^2=O(h^{-n+k+1}).
\end{equation}
\end{maintheo}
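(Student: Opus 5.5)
The plan is to reproduce $u_h$ by a microlocalized joint spectral projector and then bound its diagonal kernel by a volume count in $T^*_xM$. Fix $\psi\in\mathcal S(\R)$ with $\psi(0)=1$ and $\widehat{\psi}$ supported in $(-\epsilon,\epsilon)$, $\epsilon$ small, and set $\rho=|\psi|^2$. Write $P_1=\sqrt{-h^2\Delta}$ and define
$$
B:=A(x,hD)\prod_{j=1}^n\psi\Big(\frac{P_j-E_j}{h}\Big).
$$
Since $P_ju_h=E_ju_h$ for every $j$, we have $Bu_h=A(x,hD)u_h$. By Cauchy--Schwarz and $\|u_h\|_{L^2}=1$,
$$
|A(x,hD)u_h(x)|^2\le \int |B(x,y)|^2\,dy=(BB^*)(x,x).
$$
Because the $P_j$ commute, $BB^*=A\,\prod_j\rho((P_j-E_j)/h)\,A^*$. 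The theorem therefore reduces to the diagonal kernel bound
$$
(BB^*)(x,x)=O(h^{-n+k+1}).
$$

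Fourier inversion writes the middle factor as
$$
\prod_j\rho\Big(\frac{P_j-E_j}{h}\Big)=(2\pi)^{-n}\int_{\R^n}\widehat\rho(t)\,e^{-it\cdot E/h}\,e^{it\cdot \hat{\mathcal P}/h}\,dt ,
$$
with $\widehat\rho=\widehat\psi*\widehat\psi$ supported in $|t|<2\epsilon$. For such short times the joint propagator $e^{it\cdot\hat{\mathcal P}/h}$ is a semiclassical Fourier integral operator. Its kernel has a local parametrix $(2\pi h)^{-n}\int e^{\frac ih(\phi(t,x,\eta)-\langle y,\eta\rangle)}b(t,x,y,\eta)\,d\eta$ with $\phi(t,x,\eta)=\langle x,\eta\rangle+t\cdot\mathcal P(x,\eta)+O(t^2)$, uniformly in $|t|<2\epsilon$. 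Composing with $A$ and $A^*$ and restricting to $y=x$, then performing the $y$-integrations by stationary phase (or simply the symbol calculus), I expect to obtain
$$
(BB^*)(x,x)=(2\pi h)^{-n}\int_{T_x^*M}\rho^{\sharp}\Big(\frac{\mathcal P(x,\xi)-E}{h}\Big)\,|a_0(x,\xi)|^2\,d\xi+O(h^{-n+k+2}),
$$
where $\rho^{\sharp}(s)=\prod_j\rho(s_j)$ up to an $O(h)$-smooth perturbation coming from the $O(t^2)$ terms of the phase. In particular $\rho^\sharp$ is rapidly decaying in $s\in\R^n$. If needed, the $O(t^2)$ terms can be absorbed by a change of variables in $\xi$.

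What remains is a volume estimate. Consider the map $F:\xi\mapsto\mathcal P(x,\xi)$ on a neighbourhood of $\Gamma=\mathcal C_x\cap\supp a$. The principal type condition (\ref{principal}) gives $\partial_\xi p_1\neq0$, transversal to $\mathcal C_x$. The rank $k$ condition (\ref{rk}) gives $k$ further independent derivatives tangent to $\mathcal C_x$. Hence $\operatorname{rank}dF\ge k+1$ on $\Gamma$, and by lower semicontinuity of rank and compactness of $\Gamma$ also on a fixed neighbourhood $U$. By the constant-rank-type coordinates (choose $k+1$ of the $p_j$ as coordinates and apply the coarea formula), $F$ restricted to $U$ obeys
$$
|\{\xi\in U:|\mathcal P(x,\xi)-E|\le r\}|\le Cr^{k+1}.
$$
Summing over dyadic shells $r=2^mh$ against the rapid decay of $\rho^\sharp$ gives $O(h^{k+1})$ for the integral over $U$. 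Outside $U$, where $|p_1-E_1|\ge c$, the decay of $\rho^\sharp$ gives $O(h^\infty)$. Multiplying by $(2\pi h)^{-n}$ yields (\ref{main cor eq}).

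The main obstacle is the second step: justifying the parametrix of the joint propagator uniformly for $|t|$ small, and checking that the remainder terms, including the lower-order symbols $a_j$ and the $O(t^2)$ phase corrections, really are of lower order $O(h^{-n+k+2})$ rather than merely $O(h^{-n+1})$. I would handle this by keeping the remainder inside the same oscillatory-integral form, with symbols still supported near $\Gamma$, so that the same volume count applies to it with an extra factor of $h$.
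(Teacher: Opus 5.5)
Your reduction is sound, and in places cleaner than the paper's. Because $u_h$ is a joint eigenfunction you can insert the joint cutoff directly, which bypasses the paper's comparison \eqref{upshot} via the joint-spectrum lemma and the supremum over $E_2$. Your coarea count also handles the fact that which $p_j$ are independent varies along $\Gamma$.

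The gap is the step you yourself call the main obstacle, and it carries the whole content of the theorem. On the diagonal the phase of $Ae^{it\cdot\hat{\mathcal P}/h}A^*$ is $\Psi(t,\xi)=t\cdot\mathcal P(x,\xi)+Q(t,\xi)$ with $Q=O(|t|^2)$. Since $Q/h$ is of size $|t|^2/h$, which is unbounded on $\supp\widehat\rho$, the $t$-integral is not $\prod_j\rho((p_j-E_j)/h)$ up to an $O(h)$ error. Absorbing $Q$ by a change of variables $\xi\mapsto\kappa_t(\xi)$ with $t\cdot\mathcal P(x,\kappa_t(\xi))=\Psi(t,\xi)$ is the right instinct, but it is not available with all $n$ cutoffs:
\begin{itemize}
\item At points where $d_\xi\mathcal P(x,\cdot)$ has rank $<n$ (under the hypothesis as stated, every point of $\Gamma$ when $k<n-1$), there are directions $t\neq0$ with $t\cdot\partial_\xi\mathcal P(x,\xi)=0$.
\item In those directions both $\Psi(t,\cdot)$ and $t\cdot\mathcal P(x,\cdot)$ have critical points. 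For $\Psi$ they come from short loops of the joint flow through the caustic fibre.
\item Any such $\kappa_t$ would have to preserve critical values, and these in general differ at order $|t|^3$. I checked this at a fold, for a surface of revolution at a covector tangent to a parallel off the equator, where the discrepancy is a nonzero multiple of $|t|^3$.
\end{itemize}
So no such conjugation exists, and your kernel formula is not established by the route you describe.

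The missing idea is to insert only $k+1$ cutoffs.
\begin{enumerate}
\item Split $a$ by a finite partition of unity so that on each piece a fixed index set $J\ni 1$ with $|J|=k+1$ has $\{\partial_\xi p_j\}_{j\in J}$ linearly independent. Locally, this is what the rank $k$ condition together with \eqref{principal} gives.
\item Take $B=A\prod_{j\in J}\psi((P_j-E_j)/h)$; you still have $Bu_h=Au_h$.
\item Now $\mathcal P_J(x,\cdot)$ is a submersion. Write $Q=t\cdot v$ with $v(t,\xi)=O(|t|)$ smooth (Taylor in $t$), and solve $\mathcal P_J(x,\xi+\delta)=\mathcal P_J(x,\xi)+v(t,\xi)$ with $\delta(t,\xi)=O(|t|)$ smooth.
\item After this change of variables the phase is exactly $t\cdot\mathcal P_J(x,\xi')$ and the Jacobian goes into a smooth amplitude. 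The $t$-integral is then a Schwartz function of $(\mathcal P_J(x,\xi')-E_J)/h$, and your volume count in $\R^{k+1}$ finishes the proof.
\end{enumerate}

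This is the Fourier-dual of what the paper does. The paper also uses only $k+1$ of the operators, but it keeps the times fixed and integrates by parts in the angular variables. It uses $|\partial_\theta\Phi|\ge c|s|$ and $|\partial_\theta^\alpha\Phi|\le C|s|$ (see \eqref{phi low} and \eqref{phi up}), so the $O(s^2)$ phase corrections are dominated and no linearisation of the phase is ever needed.
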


\begin{rem}
Section \ref{section2} shows that Theorem \ref{trace} relies on the trace estimates, which also play a critical role in proving the local Weyl law (see \cite[Section 5]{So2014}).
\end{rem}




\begin{defn}\label{morse}
    We say that $\mathcal{P}=(p_1,\cdots,p_n)$ is {\bf Morse} at $x\in M$ if there exists a function $f\in C^{\infty}(\R^n,\R)$, such that the principal symbol $q$ of the operator $Q:=f(P_1,\cdots,P_n)$ satisfies 
    \begin{equation}
        q\big|_{\mathcal{C}_x}(\xi) \text{ is a Morse function in } \xi. 
    \end{equation}
\end{defn}

\begin{rem}
    For surfaces of revolution, let $\hat{\mathcal{P}}$ be the standard QCI system (see Section 4). Then $\mathcal{P}$ is \textit{not} Morse at north and south poles, because $P_2$ vanishes there and $f=f(P_1)$, the symbol of whom, restricted to the energy surface of $p_1$, is a constant function.
    
    Also for the triaxial ellipsoid, the standard QCI system is \textit{not} Morse at the four umbilical points \cite{GT2020}. 
\end{rem}

Our main theorem is 

\begin{maintheo}\label{main1}
If $\mathcal{P}|_{\Gamma}$ is rank $k$ \rm{($k\geq 2$ if $n>3$)} at $x$ with $\Gamma=\mathcal{R}_x\cap \bigcap\limits_{2\leq j \leq n}\{|p_j(x,\xi)-E_j|\leq \epsilon\} $ for some $\ep>0$, and $\mathcal{P}$ is Morse, then there exists a positive constant $h_0$ such that for any $h\in (0, h_0]$,
\begin{equation}\label{main1 eq}
| u_h(x) |  = o\big(I_n(h)\big),
\end{equation}
where $I_2(h)=h^{-1/4}$, $I_3(h)=h^{-1/2}|\log h|^{1/2}$ and $I_n(h)=h^{(2-n)/2}$ if $n>3$. 

Furthermore, if $\mathcal{P}|_{\Gamma'}$ is rank $k$ at $x$ with $\Gamma'=\mathcal{C}_x\cap \bigcap\limits_{2\leq j \leq n}\{|p_j(x,\xi)-E_j|\leq \epsilon\}$ for some $\ep>0$, then there exists a positive constant $h_0'$ such that for any $h\in (0, h_0']$,
\begin{equation}\label{main1 eq2}
| u_h(x) | = O(h^{\frac{-n+k+1}2}).
\end{equation}

\end{maintheo}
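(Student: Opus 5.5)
The plan is to reduce both statements to Theorem \ref{trace} by microlocalizing $u_h$ near the joint energy shell. We use the joint eigenfunction property to cut away the part of $\mathcal{C}_x$ where some $|p_j-E_j|>\epsilon$.

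Choose $\chi\in C_c^\infty(\R^{n-1})$ with $\chi=1$ near $0$ and $\supp\chi\subset\{|s_j|\le\epsilon\}$. Let $\chi_0\in C_c^\infty(\R)$ equal $1$ near $E_1$. Set
\[
A(x,hD):=\chi_0(P_1)\,\chi\big(P_2-E_2,\dots,P_n-E_n\big),
\]
defined by the joint functional calculus for the commuting family. This is a pseudodifferential operator in $\Psi^0$ with principal symbol $\chi_0(p_1)\chi(p_2-E_2,\dots,p_n-E_n)$. Since $P_ju_h=E_ju_h$, we have $A u_h=u_h$ exactly, assuming $E_j(h)\to E_j$, so that $\chi(E(h)-E)=1$ for small $h$. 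A subtlety is that the full symbol of $A$ has lower order terms $a_j$, $j\ge1$. Their supports still lie in the support of the principal symbol, since the functional calculus symbol expansion only involves derivatives of $\chi$. Hence $\Gamma=\mathcal{C}_x\cap\supp a$ is contained in $\Gamma'$.

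The rank $k$ hypothesis on $\Gamma'$ then gives the hypothesis of Theorem \ref{trace}. Therefore $|u_h(x)|^2=|A u_h(x)|^2=O(h^{-n+k+1})$, which is \eqref{main1 eq2}. Some care is needed to shrink the support slightly, so that it stays inside the closed set $\Gamma'$ on which the rank condition holds; this is harmless.

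For \eqref{main1 eq}, split $u_h=A_1 u_h+A_2 u_h$ microlocally near $\mathcal{C}_x$. Here $A_1$ has symbol supported in a small neighbourhood $V$ of $\mathcal{R}_x\cap\Gamma'$, and $A_2$ is supported away from the recurrent set.

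The $A_1$ piece is controlled by the same argument. The rank $k$ condition is open, so it persists on a small neighbourhood of $\Gamma$. Theorem \ref{trace} gives $|A_1u_h(x)|=O(h^{(-n+k+1)/2})$. This is $o(I_n(h))$ exactly when $k\ge2$ for $n>3$: then $(-n+k+1)/2\ge(3-n)/2>(2-n)/2$. For $n=2,3$ the case $k\ge1$ gives $h^{(2-n)/2}$ at worst, namely $h^0=o(h^{-1/4})$ and $h^{-1/2}=o(h^{-1/2}|\log h|^{1/2})$. This explains the restriction $k\ge2$ when $n>3$.

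The $A_2$ piece, microsupported on non-recurrent directions, is where the Morse hypothesis enters. We need the Galkowski--Toth bound \eqref{known2}, localized: under the Morse condition, $|B u_h(x)|=O(I_n(h))$ for any zeroth order $B$. Combined with the non-recurrent improvement of Sogge--Toth--Zelditch / Galkowski \cite{STZ11,Gal19}, the contribution from directions that are not recurrent should be $o(I_n(h))$. The mechanism is a long-time averaging argument: one writes $u_h=\rho_T(P_1-E_1)u_h$ with $T\to\infty$ and estimates the kernel at $x$ microlocally on non-looping, or non-recurrent, sets.

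This last step is the main obstacle. The standard non-recurrence improvements yield $o(h^{(1-n)/2})$. Here we need $o(I_n(h))$, which requires running the Galkowski--Toth argument with the Morse structure of $q|_{\mathcal{C}_x}$ while inserting the cutoff $A_2$. I would first try to show that the geodesic beam/averaging estimate of \cite{CG21,Gal19} refines the GT bound by a factor $\delta(T)\to0$ on non-recurrent sets. Then I would take $V$ shrinking and $T\to\infty$ slowly with $h$, and combine this with the $A_1$ bound.
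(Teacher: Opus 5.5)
\paragraph{Verdict.} Your argument for \eqref{main1 eq2} and for the piece $A_1u_h$ is sound, but the piece $A_2u_h$, which carries all of \eqref{main1 eq}, is not proved. Be aware that the paper's own argument for that step does not close it either.

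\paragraph{What matches the paper.} Your treatment of \eqref{main1 eq2} and of $A_1u_h$ is the paper's argument. You cut off to a neighbourhood of $\Gamma'$ (resp.\ $\Gamma$) on which the rank condition persists, and apply Theorem \ref{trace}. Your exponent count is exactly why $k\geq 2$ is imposed for $n>3$.

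\paragraph{The gap.} For $A_2u_h$ you only assert that non-recurrent directions ``should'' contribute $o(I_n(h))$, and leave the geodesic-beam refinement as a plan.

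Your split is also not one a long-time argument can use. To make the long-time part of $T^{-1}\int\hat\rho(t/T)(A_2e^{\frac{i}{h}t(P_1-E_1)}A_2^*)(t,x,x)\,dt$ negligible, $A_2$ must be supported where no $\delta$-loops of length $\lesssim T$ return into its support. Being away from $\mathcal{R}_x$ does not give this, because non-recurrent directions may still loop. Handling $\mathcal{L}_x\setminus\mathcal{R}_x$ is precisely the extra work of \cite{STZ11} over \cite{SZ02}. This is why the paper's cut-off $b$ depends on $T$: it requires $L_\delta\geq 2T$ on $\supp(1-b)$, and then chops $1-b$ into caps of diameter $<\delta/10$.

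\paragraph{The paper's route for this piece.} The paper uses Sogge--Toth--Zelditch averaging:
\begin{enumerate}
\item Cauchy--Schwarz with $\chi(Th^{-1}(P_1-E_1))$.
\item A split of the time integral at $|t|\sim 1$.
\item The long-time part is $O(h^\infty)$ by \eqref{BI}.
\item The short-time part is claimed to be $\leq CT^{-1}I_n(h)^2$ in \eqref{BII}, via \eqref{upshot} and Lemma \ref{lwl}.
\end{enumerate}
Do not expect this to close your gap; it founders on exactly the obstacle you identified.

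\paragraph{Why that route fails as written.} The short-time term contains no cut-off in $P_2$. By the local Weyl law it is therefore $\asymp T^{-1}h^{1-n}$, not $O(T^{-1}I_n(h)^2)$.

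The inequality \eqref{upshot}, which is used to insert that cut-off, moves $\sup_{E_2}$ outside $\sum_j$ and is false. Take flat $\mathbb{T}^2$ with $P_2=-ih\partial_{x_2}$ and $A=I$. Its left side is $\asymp h^{-1}$, while its right side is $O(h^{-1/2})$.

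Inserting $\chi(h^{-1}(P_2-E_2(h)))$ directly is legitimate, since $u_h$ is a joint eigenfunction. This is the correct way to obtain Corollary \ref{cor} and Theorem \ref{trace}, so your uses of Theorem \ref{trace} stand. In the averaging argument, however, it causes new trouble:
\begin{itemize}
\item The long-time term becomes governed by returns of the joint flow $\exp(tH_{p_1})\exp(sH_{p_2})$ with $|s|\leq\epsilon_0$. Non-looping of the $p_1$-flow alone does not exclude these returns.
\item For $n=2$, a joint window of size $(h/T)\times O(h)$ near a regular value contains only $O(1)$ joint eigenvalues, one of them that of $u_h$ itself. So no factor $T^{-1}$ can be gained this way.
\end{itemize}

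\paragraph{What is missing.} This step is unproved both in your sketch and, as written, in the paper. What is needed is an estimate that beats $I_n(h)$ on non-recurrent directions while keeping the joint cut-off in $P_2$.
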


\begin{rem}
   \begin{enumerate}
       \item Regarding the first conclusion (\ref{main1 eq}): The pointwise bound for a general Laplace eigenfunction (without quantum integrability assumption) possess an improvement to little $o$ if $|\mathcal{L}_x|=0$, \cite{SZ02} (In fact, only need $|\mathcal{R}_x|=0$ ,\cite{STZ11}). It is tempting to ask if the same is true for joint quasimodes in the QCI case. The answer is negative. On a generic biaxial ellipsoid, if the point $x$ is on the equator, then $|\mathcal{R}_x|=|\mathcal{L}_x|=0$. However, one can construct a quasimode of magnitude $\sim  h^{-1/4}$ attained on the equator, see section \ref{section4}. That shows some extra condition such as the rank $k$ is necessary.
       \item Regarding the second conclusion (\ref{main1 eq2}): If the conditions there are satisfied for all points $x\in M$, we restore the $L^{\infty}$ bound in \cite{Tacy19}. We note that our approach is different with Tacy's. \cite{Tacy19} is based on the result of \cite{KTZ}, however, our approach is self-contained and works point-wise. Also, the condition rank $k$ arises naturally in our proof (ref. \eqref{phi low}, \eqref{rank2 1} and \eqref{rank2 2}).
   \end{enumerate}
\end{rem}

\subsection{Example and superintegrability}\label{section superint}



Let $M=\mathbb{T}^n$, $n\geq 3$, then $\mathcal{C}_x$ is the round cosphere. Let $p_2=\xi_2,\cdots,p_n=\xi_n$. On any pair of poles, at most one of $p_j$'s is degenerate. Therefore, $\mathcal{P}|_{\mathcal{C}_x}$ only satisfies a rank $n-2$ condition at any $ x\in \mathbb{T}^n$. As a consequence, (\ref{main1 eq2}) gives $|u_h(x)|=O(h^{-1/2})$, a bound that is not optimal. However, we can use a simple trick as follows:

Let $u_h$ be the joint eigenfunction of 
$$\hat{\mathcal{P}}=\{\sqrt{-h^2\Delta},-ih\partial_{x_2},-ih\partial_{x_3},\cdots,-ih\partial_{x_n}\}$$
with eigenvalue $E=(1,1,0,\cdots,0)$, then $\mathcal{P}$ degenerates at $(|\xi|,\xi_2,\cdots,\xi_n)=(1,1,0,\cdots ,0)$. But the same function $u_h$ can be viewed as a joint function 
$$
\hat{\mathcal{Q}}=\{\sqrt{-h^2\Delta},-ih\partial_{x_1},-ih\partial_{x_3},\cdots,-ih\partial_{x_n}\}
$$
with eigenvalue $F=(1,0,0,\cdots,0)$. $\mathcal{Q}$ satisfies the rank $n-1$ condition on $\Gamma'$. As a consequence, (\ref{main1 eq2}) gives $|u_h(x)|=O(1)$ which is sharp.

\begin{figure}[h]
\includegraphics[width=0.9\textwidth]{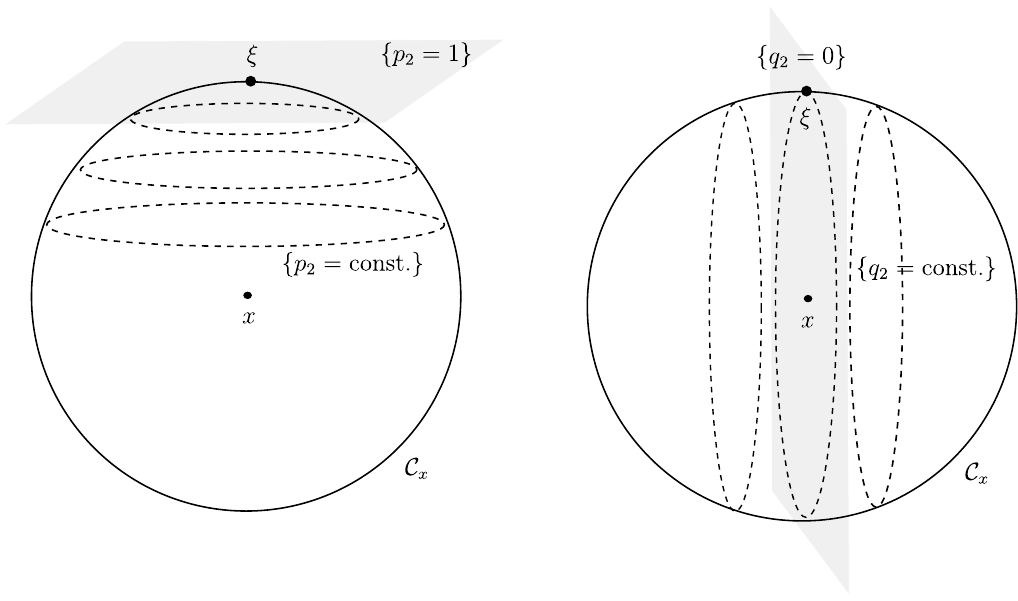}
\caption{On the left: At $\xi$, the system $\mathcal{P}$ only satisfies a rank $n-2$ condition; On the right: At $\xi$, the system $\mathcal{Q}$ satisfies the rank $n-1$ condition. The same function $u_h$ is a joint eigenfunction of both $\hat{\mathcal{P}}$ and $\hat{\mathcal{Q}}$.}
\label{Figure 1}
\end{figure}

This method is possible only if there are at least $n$ independent, commuting operators that also commute with the Laplacian. We will not delve deeper into this issue in this paper.

\subsection{Plan of the paper} Section \ref{section2} proves Theorem \ref{trace} by a Fourier inversion of the operator and computing the on-diagonal Schwartz kernel, which we call the trace estimate (\`a la Sogge). It is an important technical ingredient of this paper. In section \ref{section3}, we finish the proof of Theorem \ref{main1} by taking a closer look at the recurrent set and building test symbols accordingly. We conclude by a study of examples on surfaces of revolutions in section \ref{section4}.

\subsection*{Acknowledgments} We thank Weiwei Wang for valuable discussions at the beginning of preparing this paper.

\section{Trace estimates}\label{section2}

In this section, we aim to prove Theorem \ref{trace}. Firstly we establish following trace estimate which is similar to Proposition 2.2 in \cite{SZ02}, but in QCI case.

\begin{lem}\label{lwl}
Assume $A(x, hD)\in \Psi^0(M)$ with symbol $a=\sum_{j=0}^\infty a_j h^j$. Let $\rho\in C_0^\infty(\mathbb{R})$ vanish for $|t|\geq \epsilon_0$, $\epsilon_0>0$ small enough, and satisfy $\rho(0)=1$. Let $\Gamma:=\mathcal{C}_x\cap \supp a$. Then if $(p_1|_{\Gamma}, p_2|_{\Gamma})$ is {\bf rank $1$} at $x$, one has that
\begin{equation}\label{main lemma eq2}
\int_{\mathbb{R}} \hat{\rho}(t) \big(A e^{\frac{i}h t(P_1-E_1)}  \rho\big(h^{-1}[P_2(h)-E_2]\big)  A^*\big)(t,x,x) dt =O(h^{-n+2}).
\end{equation}

Otherwise, one has the following
\begin{equation}\label{main lemma eq}
\int_{\mathbb{R}} \hat{\rho}(t) \big(A e^{\frac{i}h t(P_1-E_1)}  \rho\big(h^{-1}[P_2(h)-E_2]\big)  A^*\big)(t,x,x) dt=O(1) I_h(x)^2,
\end{equation}
if $(p_1, p_2)$ is Morse type.
\end{lem}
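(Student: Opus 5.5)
We plan to write the operator as an oscillatory integral and evaluate its kernel on the diagonal by stationary phase in the fibre variables. First, since $|t|\le\epsilon_0$ on the support of $\hat\rho$ (after swapping roles if needed, we use that $\hat\rho$ is Schwartz and cut off to $|t|\le \epsilon_0$ modulo $O(h^\infty)$ using the fact that $\rho$ has compact support and a standard parametrix argument), we use the Lax parametrix for $e^{\frac{i}{h}t(P_1-E_1)}$. For small $|t|$ its kernel is
\[
(2\pi h)^{-n}\int e^{\frac{i}{h}(\varphi(t,x,\eta)-y\cdot\eta - tE_1)}b(t,x,\eta;h)\,d\eta ,
\]
with $\varphi(t,x,\eta)=x\cdot\eta+tp_1(x,\eta)+O(t^2)$. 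Next, $\rho(h^{-1}[P_2-E_2])=\frac1{2\pi}\int\hat\rho(s)e^{\frac{i}{h}s(P_2-E_2)}ds$, and the same parametrix applies for $|s|\le\epsilon_0$. Composing these with $A$ and $A^*$, and using that $P_1,P_2$ commute, the diagonal value becomes
\[
(2\pi h)^{-n}\iint\!\!\int e^{\frac{i}{h}\Psi(t,s,x,\eta)}\hat\rho(t)\hat\rho(s)\,c(t,s,x,\eta)\,d\eta\,ds\,dt ,
\]
where $\Psi=t(p_1(x,\eta)-E_1)+s(p_2(x,\eta)-E_2)+O((|t|+|s|)^2|\eta|)$ and $c$ carries $|a(x,\eta)|^2$ to leading order.

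Then we perform stationary phase in $(t,s)$ jointly with the two transversal fibre coordinates. Because $p_1$ is of real principal type, we introduce polar-type coordinates $\eta=(r,\omega)$ with $r=p_1(x,\eta)$ and $\omega\in\mathcal C_x$. Integrating in $(t,r)$ is a nondegenerate two-dimensional stationary phase; it gives a factor $h$ and restricts to $r=E_1$. The remaining integral is
\[
h^{1-n}\int_{\mathcal C_x}\int \hat\rho(s)\,e^{\frac{i}{h}s\,(p_2(x,\omega)-E_2)}\,|a(x,\omega)|^2\,d\omega\,ds
= h^{1-n}\int_{\mathcal C_x}\rho\Big(\tfrac{p_2(x,\omega)-E_2}{h}\Big)|a|^2\,d\omega+\text{lower order}.
\]
Under the rank 1 hypothesis, $\partial_\omega p_2\neq0$ on $\Gamma$, so $p_2|_{\mathcal C_x}$ can be used as a coordinate near $\Gamma$. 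The set $\{|p_2-E_2|\lesssim h\}\cap\Gamma$ then has measure $O(h)$, which yields $O(h^{-n+2})$, i.e. \eqref{main lemma eq2}. Equivalently, one can do stationary phase in $(s,\,p_2)$ and get the extra factor $h$. In the Morse case, the critical points of $p_2|_{\mathcal C_x}$ are nondegenerate on the $(n-1)$-dimensional $\mathcal C_x$. A local Morse-lemma computation of the measure of $\{|p_2-E_2|\le h\}$ near a critical point of signature $(p,q)$ gives $h^{1/2}$ for $n=2$, $h|\log h|$ for $n=3$, and $h$ for $n\ge4$ (near the critical values). This matches $h^{n-1}I_h(x)^2$ and gives \eqref{main lemma eq}.

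The main obstacle is justifying that the $O(t^2,s^2)$ corrections in the phase and the $h$-dependent lower-order terms do not destroy the $h$-localization. The measure estimate is actually of the smoothed quantity $\int\rho((p_2-E_2)/h)$, so corrections of size $|s|\cdot O(|s|)$ in the phase must be absorbed. I would try to handle this by noting that the phase is exactly $s(p_2-E_2)$ modulo terms divisible by $s^2$ times smooth functions. A change of variable in $\omega$ (or a Taylor expansion with integration by parts in $s$) then reduces to the model case with only $O(h)$ relative error. The other delicate point is the uniformity of the Morse measure estimate when $E_2$ lies near a critical value; I would handle it by the explicit quadratic model after the Morse lemma.
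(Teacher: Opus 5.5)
Your skeleton matches the paper's: Lax parametrices for both propagators, reduction of the diagonal kernel to one fibre integral with phase $t(p_1-E_1)+s(p_2-E_2)+O((|t|+|s|)^2)$ (the paper gets there by explicit stationary phase in $(y_1,\xi)$, $(y_2,\eta)$, $(w_1,\xi_2)$; you quote the joint generating function), then nondegenerate stationary phase in $(t,r)$. That leaves $h^{1-n}\iint\hat\rho(s)e^{\frac ih\Phi(s,\omega)}c\,d\omega\,ds$ with $\Phi=s(p_2(x,\omega)-E_2)+O(s^2)$. The last step is where you differ.

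In the rank $1$ case the paper separates $|s|\le h$ (trivial bound $h^{2-n}$) from $|s|>h$. On the second piece it integrates by parts twice in a coordinate $\theta_j$ with $|\partial_{\theta_j}\Phi|\ge C|s|$ and $|\partial^\alpha_\theta\Phi|\le C|s|$, which gives $h^{1-n}\int_{h<|s|<1}(h/|s|)^2\,ds=O(h^{2-n})$. Your alternative is to take $u=p_2|_{\mathcal C_x}-E_2$ as a local coordinate and do stationary phase jointly in $(s,u)$. This is correct and somewhat cleaner. The phase $su+s^2g(s,u,\omega')$ has the single critical point $(0,0)$, where $\partial_s^2=2g$, $\partial_s\partial_u=1$ and $\partial_u^2=0$, so the Hessian has determinant $-1$ whatever $g$ is. The $O(s^2)$ correction is therefore absorbed automatically, and you get an asymptotic of order $h^{2-n}$ with explicit leading coefficient rather than only an upper bound.

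The step that does not work as written is the reduction to $h^{1-n}\int\rho((p_2-E_2)/h)|a|^2\,d\omega$ followed by a sublevel-set count, which is your only mechanism in the Morse case. That identity holds only for the model phase $s(p_2-E_2)$, and your suggested repairs do not recover it:

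\begin{itemize}
\item Taylor expanding $e^{\frac ih s^2g}$ fails once $|s|\gg h^{1/2}$.
\item For fixed $\omega$, the $s$-integral has a stationary point $s_*\approx-(p_2-E_2)/2g$ whenever $|p_2-E_2|\lesssim\epsilon_0$, and there it is generically of size $h^{1/2}$. So it is not localized to $|p_2-E_2|\lesssim h$, and integrating in $s$ first and then taking absolute values only gives $O(h^{\frac32-n})$.
\item An $s$-dependent Morse change of variables straightens $\Phi(s,\cdot)/s$ to $c(s)+Q(\omega')$, but the critical value $c(s)$ still moves with $s$, so the model identity does not come back.
\end{itemize}

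In the rank $1$ case your joint $(s,u)$ stationary phase bypasses all this. In the Morse case, where $u$ is not a coordinate, use the paper's order of integration. For fixed $s$, $\Phi(s,\cdot)/s$ is an $O(s)$-perturbation of the Morse function $p_2|_{\mathcal C_x}-E_2$. Stationary phase in $\omega$ with large parameter $|s|/h$ then gives $\bigl|\int e^{\frac ih\Phi(s,\omega)}c\,d\omega\bigr|\le C\min\bigl(1,(h/|s|)^{\frac{n-1}2}\bigr)$, uniformly in $s$ and $E_2$. Integrating this in $s$ reproduces exactly your counts $h^{1/2}$, $h|\log h|$, $h$. So your numbers are right, but they must come from integrating in $\omega$ before $s$.

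Finally, drop the parenthetical about cutting $\hat\rho$ off to $|t|\le\epsilon_0$ modulo $O(h^\infty)$ when $\rho$ is compactly supported. That is false in general: long-time loop contributions, for example on the round sphere, are of the same order as the main term. The hypothesis actually used, by you and by the paper, is $\supp\hat\rho\subset[-\epsilon_0,\epsilon_0]$.
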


\begin{proof}

We can write the Schwartz kernel of $e^{\frac{i}h t(P_1-E_1)}$ in the form 
\begin{equation}
(2\pi h)^{-n} \int_{\mathbb{R}^n}e^{\frac{i}h [\varphi_1(t, y_1,\xi_1)-\left<w_1, \xi_1\right>-tE_1]} a_1(t, y_1,w_1,\xi_1;h) d\xi_1+ O_{C^\infty}(h^\infty)
\end{equation}
where $a_1\sim \sum_{j=0}^\infty a_{1,j} h^j$, $a_{1,j}\in C^\infty$, $a_{1,0}\geq C>0$ and $\varphi_1(t, y_1,\xi_1)$ solves the eikonal equation
\begin{equation}\label{phi1}
\partial_t \varphi_1(t, y_1,\xi_1)=p_1(y_1, \partial_{y_1} \phi_1), \quad \varphi_1(0, y_1, \xi_1)=\left<y_1,\xi_1 \right>.
\end{equation}
And
\begin{equation}
\rho\big(h^{-1}[P_2(h)-E_2]\big)=\frac{1}{2\pi}\int_{\mathbb{R}}\hat{\rho}(s) e^{\frac{i}h s(P_2(h)-E_2)} ds.
\end{equation}
Here $e^{\frac{i}h s(P_2(h)-E_2)}$ has a Schwartz kernel that is of the form
\begin{equation}
(2\pi h)^{-n} \int_{\mathbb{R}^n}e^{\frac{i}h [\varphi_2(s, w_1,\xi_2)-\left<y_2, \xi_2\right>-sE_2]} a_2(s, w_1,y_2,\xi_2;h) d\xi_2+ O_{C^\infty}(h^\infty),
\end{equation}
where $a_2\sim \sum_{j=0}^\infty a_{2,j} h^j$, $a_{2,j}\in C^\infty$, $a_{2,0}\geq C>0$ and $\varphi_2(s,w_1,\xi_2)$ solves the eikonal equation
\begin{equation}\label{phi2}
\partial_s \varphi_2(s, w_1, \xi_2)=p_2(w_1, \partial_{w_1} \phi_2), \quad \varphi_2(0, w_1, \xi_2)=\left<w_1,\xi_2 \right>.
\end{equation}
From \eqref{phi1} and \eqref{phi2}, one can derive the following Taylor expansions for $\varphi_1(t, y_1,\xi_1)$ (resp. $\varphi_2(s, w_1,\xi_2)$) centered at $t=0$ (resp. $s=0$).
\begin{align}
\varphi_1(t, y_1,\xi_1)&=y_1\xi_1+ tp_1(y_1,\xi_1)+O_{y_1,\xi_1}(t^2), \\
\varphi_2(s, w_1,\xi_2)&=w_1\xi_2+ sp_2(w_1,\xi_2)+O_{w_1,\xi_2}(s^2).
\end{align}




In conclusion, we need to prove that
\begin{align}\label{main kernel}
h^{- 4n}\int &\hat{\rho}(s)\hat{\rho}(t) \exp[\frac{i}h \left<x-y_1, \xi\right>] \exp[\frac{i}h \Phi(s,t,y_1, w_1, y_2, \xi_1, \xi_2)]\exp[\frac{i}h \left<y_2-x, \eta\right>]\cdot  \nonumber \\
& c(s,t,y_1,w_1,y_2,\xi_1,\xi_2;h) a(x, \xi) \overline{a(x, \eta)} dy_1 d\xi dw_1 d\xi_1 d\xi_2 dy_2 d\eta dt ds,
\end{align}
can be bounded by the right hand side of \eqref{main lemma eq2} or \eqref{main lemma eq}, here $c(\cdot)=a_1(t, y_1,w_1,\xi_1;h) a_2(s,w_1,y_2,\xi_2;h) $ and
\begin{align*}
\Phi(s,t,y_1,w_1,y_2,\xi_1, \xi_2)=&\left<y_1-w_1,\xi_1 \right>+\left<w_1-y_2, \xi_2\right> +t\big(p_1(y_1,\xi_1)-E_1\big)+s\big( p_2(w_1,\xi_2)-E_2 \big)\nonumber \\
&+O_{y_1,\xi_1}(t^2)+O_{w_1,\xi_2}(s^2).
\end{align*}

Now we set
\begin{equation*}
c'(s,t,y_1,w_1,y_2,\xi_1,\xi_2;h)=\chi\big(p_1(y_1,\xi_1)-E_1\big) \chi\big(p_2(w_1,\xi_2)-E_2\big)c(s,t,y_1,w_1,y_2,\xi_1,\xi_2;h),
\end{equation*}
here $\chi\in C_0^\infty(\mathbb{R})$ vanish for $|t|\geq \epsilon$, $\epsilon>0$ small enough, and satisfy $\chi(0)=1$.
Hence \eqref{main kernel} is equal to
\begin{align}\label{main kernel1}
h^{- 4n}\int &\hat{\rho}(s) \hat{\rho}(t) \exp[\frac{i}h \left<x-y_1, \xi\right>] \exp[\frac{i}h \Phi(s,t,y_1,w_1,y_2,\xi_1,\xi_2)]\exp[\frac{i}h \left<y_2-x, \eta\right>]  \nonumber \\
 &c'(s,t,y_1,w_1,y_2,\xi_1,\xi_2;h) a(x, \xi) \overline{a(x, \eta)} dy_1 d\xi dw_1 d\xi_1 d\xi_2 dy_2 d\eta dt ds + O(h^\infty).
\end{align}

One can apply stationary phase in $(y_1, \xi)$-variables in \eqref{main kernel1}. The critical point equations for $(y_1, \xi)$ are
\begin{align}
\xi&=\xi_1+ t\partial_{y_1} p_1(y_1,\xi_1)+ O(t^2), \\
y_1&=x.
\end{align}

With applying Taylor expansion at the critical point $(y_{1c}, \xi_{c})$ one has that \eqref{main kernel1} equals
\begin{align}
I (x,x; h):=&h^{-3n}\int \hat{\rho}(s) \hat{\rho}(t)  \exp\Big[\frac{i}h\Big(\left<x-w_1, \xi_1\right>+\left<w_1-y_2,\xi_2\right>+ t(p_1(x,\xi_1)-E_1\big)+\nonumber \\
&s\big(p_2(w_1,\xi_2)-E_2\big) +\left<y_2-x, \eta\right>+O_{w_1,\xi_2}(s^2)+O_{x,\xi_1}(t^2) \Big)\Big]\cdot  \nonumber\\
& c_1(s,t,x, w_1,y_2,\xi_1,\xi_2;h)[a(x, \xi_1)+o(1)] \overline{a(x, \eta)} dw_1 d\xi_1 d\xi_2 dy_2 d\eta dt ds,   \label{main kernel2}
\end{align}
here $c_1\sim \sum_{j=0}^\infty c_{1,j} h^j$ and $c_{1,j}\in C^\infty$.

One can apply stationary phase in $(y_2, \eta)$-variables in \eqref{main kernel2}. The critical point equations for $(y_2, \eta)$ are
\begin{align}
\eta&=\xi_2, \\
y_2&=x.
\end{align}
With applying stationary phase at the critical point $(y_{2c}, \eta_{c})$ and Taylor expansion one has that \eqref{main kernel2} equals
\begin{align*}
I (x,x; h)=&h^{-2n}\int \hat{\rho}(s) \hat{\rho}(t)  \exp\Big[\frac{i}h\Big(\left<x-w_1, \xi_1-\xi_2\right>+ t(p_1(x,\xi_1)-E_1\big)+s\big(p_2(w_1,\xi_2)-E_2\big)  \nonumber \\
+&O_{w_1,\xi_2}(s^2)+O_{x,\xi_1}(t^2) \Big)\Big]  c_2(s,t,x, w_1,\xi_1,\xi_2;h) [a(x, \xi_1)+o(1)] \overline{a(x, \xi_2)}  dw_1 d\xi_1 d\xi_2 dt ds,
\end{align*}
here $c_2\sim \sum_{j=0}^\infty c_{2,j} h^j$ and $c_{2,j}\in C^\infty$.

Next one would like to apply stationary phase in $(w_1, \xi_2)$-variables. The critical point equations for $(w_1, \xi_2)$ are
\begin{align}
\xi_2&=\xi_1-s\partial_{w_1} p_2(w_1,\xi_2)+ O(s^2), \\
w_1&=x-s\partial_{\xi_2} p_2(w_1,\xi_2)+ O(s^2).
\end{align}

Hence with applying Taylor expansion one can get that
\begin{align*}
I (x,x; h)=&h^{-n}\int \hat{\rho}(s) \hat{\rho}(t)  \exp\Big[\frac{i}h\Big(  t(p_1(x,\xi_1)-E_1\big)+s\big(p_2(x,\xi_1)-E_2\big)+O_{x,\xi_1}(t^2)  \nonumber \\
&+O_{x,\xi_1}(s^2) \Big)\Big]  c_3(s,t,x,\xi_1;h) [a(x, \xi_1)+o(1)] [\overline{a(x, \xi_1)}+o(1)]  d\xi_1 dt ds.
\end{align*}
here $c_3\sim \sum_{j=0}^\infty c_{3,j} h^j$ and $c_{3,j}\in C^\infty$.

Notice that $P_1(h)=-h^2\Delta_g$ with $E_1=1$. We use the geodesic normal coordinate about $x$ and can make the change of variables $\xi_1=r\Theta$, where $r=|\xi_1|$ and $\Theta=(\theta_1,\theta_2,\dots,\theta_{n-1})\in S^{n-1}$. One can get
\begin{align}\label{tilde I}
I &(x, x; h)= h^{-n}\int\int\int\int \hat{\rho}(s) \hat{\rho}(t)  \exp\Big[\frac{i}h\Big( t (r^2-1\big)+s\big(p_2(x, r\Theta)-E_2\big)+O_{x,\xi_1}(t^2) \nonumber\\ 
&+O_{x,\xi_1}(s^2) \Big)\Big]c_3(s,t, x,r\Theta;h)[a(x, r\Theta)+o(1)] [\overline{a(x, r\Theta)}+o(1)] r^{n-1} dr d\Theta dt ds.
\end{align}

Again one would like to apply stationary phase in $(r, t)$-variables. The critical point $(r_c,t_c)$ satisfies that
\begin{align}
2r t  +s\Theta\cdot \partial_{\xi_1} p_2(x, r\Theta)+O_{x,\xi_1}(t^2)+O_{x,\xi_1}(s^2)&=0, \label{r}\\
(r^2-1)+O_{x,\xi_1}(t)&=0. \label{E_1}
\end{align}

Taking $\epsilon_0$ small enough in the definition of $\rho$ and combining \eqref{r} and \eqref{E_1}, we can get that
\begin{equation}\label{t_c}
t_c=O(s).
\end{equation}

Taking derivative about $\Theta$ in \eqref{r}, \eqref{E_1} and combining \eqref{E_1} one can derive that
\begin{equation}\label{d t_c}
\partial_{\theta_j} t_c =O(s).
\end{equation}

Now performing stationary phase at the critical point $\big(r_c, t_c \big)$ in \eqref{tilde I} and Taylor expansion gives,
\begin{align}
I (x, x; h)=&h^{-n+1}\int\int \hat{\rho}(s)  \exp\Big[\frac{i}h s\big(p_2(x, \Theta)-E_2\big)+O_{x,\Theta}(st_c) +O_{x,\Theta}(t_c^2)+O_{x,\Theta}(s^2) \Big] \nonumber\\ 
& \cdot c_4(s, x, \Theta; h) [a(x, \Theta)+o(1)] [\overline{a(x, \Theta)}+o(1)] d\Theta ds,
\end{align}
here $c_4\sim \sum_{j=0}^\infty c_{4,j} h^j$ and $c_{4,j}\in C^\infty$.

{\bf Case I}. 
If $|s|\leq h$, it's straightforward to get that
\begin{equation}
\Big| I (x, x; h) \Big|= O(h^{-n+2}).
\end{equation}

If $|s|>h$, one can perform the stationary phase theorem about $\Theta$ due to the assumption that $(p_1, p_2)$ is Morse type. Hence
\begin{equation}
\Big| I (x, x; h) \Big|\leq C |a(x,\xi_0)|^2 h^{\frac{-n+1}2} \Big|\int_{|s|>h}s^{\frac{-n-1}2} ds\Big|=O(1) |a(x,\xi_0)|^2 I_h(x)^2.
\end{equation}

{\bf Case II}. If $(p_1|_{\Gamma}, p_2|_{\Gamma})$ is {\bf rank $1$} at $x$, integration by parts in some variable $\theta_j$ is necessary to deal with integration $|{I} (x,x; h)|$. 

Now one can separate the integration
\begin{align*}
\Big| I (x, x; h) \Big|& \leq \Big|h^{-n+1}\int_{|s|\leq h} \int \hat{\rho}(s)  \exp[\frac{i}h\Phi(s,x,\Theta) ] \cdot c_5(s, x, \Theta; h)d\Theta ds \Big| \nonumber \\
+& \Big|h^{-n+1}\int_{h<|s|<1}\int \hat{\rho}(s)  \exp[\frac{i}h\Phi(s,x,\Theta) ] \cdot c_5(s, x, \Theta; h)d\Theta ds\Big|
:=\Big| {I}_1 (x; h)\Big| + \Big| {I}_2 (x; h) \Big|,
\end{align*}
here we set $\Phi(s,x,\Theta)= s\big(p_2(x, \Theta)-E_2\big)+O_{x,\Theta}(st_c) +O_{x,\Theta}(t_c^2)+O_{x,\Theta}(s^2)$ and $c_5(s, x, \Theta; h)=c_4(s, x, \Theta; h) [a(x, \Theta)+o(1)] [\overline{a(x, \Theta)}+o(1)]$.

\noindent It's straightforward to get that 
\begin{equation}\label{last1}
\Big| {I}_1 (x; h)\Big|=O(h^{-n+2}).
\end{equation}

\noindent 
For the second term, with the help of \eqref{t_c} and \eqref{d t_c} the key observation is that for some $1\leq j\leq n-1$
\begin{equation}\label{phi low}
\big| \partial_{\theta_j} \Phi(s,x,\Theta) \big|\geq|s| \big(\big|\partial_{\theta_j} p_2(x, \Theta)\big|-O(\epsilon_0)\big)>C|s|>0
\end{equation}
due to the {\bf rank $1$ condition}  and the definition of $\rho$.

Also with the help of \eqref{t_c} and \eqref{d t_c} one has that
\begin{equation}\label{phi up}
\big|\partial_{\theta_j}^\alpha \Phi(s,x,\Theta)\big|\leq C|s|, \quad\text{for a given multi-index } \alpha.
\end{equation}

Hence, one can integrate by parts to get that
\begin{align}\label{last2}
\Big| {I}_2 (x; h) \Big|&=h^{-n+2} \Big|\int_{h<|s|<1} \int \frac{1}{\partial_{\theta_j} \Phi} \big(\partial_{\theta_j} \exp[\frac{i}h  \Phi(s,x,\Theta)]\big)  \cdot c_5(s, x, \Theta; h) d\Theta ds\Big| \nonumber\\
&\leq h^{-n+2}\Big|\int_{h<|s|<1} \int \frac{1}{\partial_{\theta_j} \Phi} \exp[\frac{i}h  \Phi(s,x,\Theta)]  \cdot \partial_{\theta_j} c_5(s, x, \Theta; h) d\Theta ds\Big| \nonumber\\
&+ h^{-n+2} \Big|\int_{h<|s|<1} \int \frac{\partial^2_{\theta_j}\Phi }{|\partial_{\theta_j} \Phi|^2}   \exp[\frac{i}h  \Phi(s,x,\Theta)] c_5(s, x, \Theta; h) d\Theta ds\Big|. \nonumber
\end{align}

One more integration by parts gives us that
\begin{align}
\Big| {I}_2 (x; h) \Big|
&\leq h^{-n+3}\Big|\int_{h<|s|<1} \int \frac{1}{|\partial_{\theta_j} \Phi|^2} \big(\partial_{\theta_j} \exp[\frac{i}h  \Phi(s,x,\Theta)]\big)  \cdot \partial_{\theta_j}c_5(s, x, \Theta; h) d\Theta ds\Big| \nonumber\\
&+ h^{-n+3} \Big|\int_{h<|s|<1} \int \frac{\partial^2_{\theta_j}\Phi }{(\partial_{\theta_j} \Phi)^3}  \big(\partial_{\theta_j} \exp[\frac{i}h  \Phi(s,x,\Theta)]\big) c_5(s, x, \Theta; h) d\Theta ds\Big| \nonumber\\
&\leq h^{-n+3}\Big|\int_{h<|s|<1} \int \frac{1}{|\partial_{\theta_j} \Phi|^2} \exp[\frac{i}h  \Phi(s, x, \Theta; h)]  \cdot \partial^2_{\theta_j}c_5(s, x, \Theta; h) \, d\Theta ds\Big| \nonumber\\
&+2 h^{-n+3}\Big|\int_{h<|s|<1} \int \frac{\partial_{\theta_j}^2\Phi}{(\partial_{\theta_j} \Phi)^3}  \exp[\frac{i}h  \Phi(s, x, \Theta; h)] c_5(s, x, \Theta; h) d\Theta ds\Big| \nonumber\\
&+h^{-n+3}\Big|\int_{h<|s|<1} \int \frac{\partial_{\theta_j}^2\Phi}{(\partial_{\theta_j} \Phi)^3}  \exp[\frac{i}h  \Phi(s, x, \Theta; h)] \cdot \partial_{\theta_j}c_5(s, x, \Theta; h) d\Theta ds\Big| \nonumber\\
&+ h^{-n+3} \Big|\int_{h<|s|<1} \int \big( \frac{\partial^3_{\theta_j}\Phi }{(\partial_{\theta_j} \Phi)^3}-3 \frac{(\partial^2_{\theta_j}\Phi)^2 }{(\partial_{\theta_j} \Phi)^4} \big)  \exp[\frac{i}h  \Phi(s,x,\Theta)] c_5(s, x, \Theta; h) d\Theta ds\Big| \nonumber\\
&\leq C h^{-n+3}  \Big|\int_{h<|s|<1}\frac{1}{s^2} ds \Big|=Ch^{-n+2},
\end{align}
with the help of \eqref{phi low} and \eqref{phi up} in the second last inequality.
\end{proof}

\bigskip
Now let us take a real-valued function $\chi\in \mathcal{S}(\mathbb{R})$ satisfying
\begin{equation}\label{spt}
\chi(0)=1\quad \text{and}\quad \hat\chi(t)=0,\quad |t|\geq\epsilon,
\end{equation}
here $\epsilon$ is a small positive constant.
Since $\chi\big(h^{-1}[P_1(h)-E_1(h)]\big)u_h=u_h$, by the Cauchy-Schwarz inequality, one has that
\begin{align}\label{red1}
&\Big| A u_h (x) \Big|^2 \nonumber \\
=&\Big| \Big(A \chi \big( h^{-1}[P_1(h)-E_1(h)]\big) u_h\Big) (x) \Big|^2 \nonumber\\
=&  \Big| \int \Big(A\chi\big( h^{-1}[P_1(h)-E_1(h)]\big)\Big) (x,y) u_h(y) dy  \Big|^2  \nonumber \\
\leq &  \int \Big| A\chi\big(h^{-1}[P_1(h)-E_1(h)]\big) (x,y) \Big|^2 dy.
\end{align}

Furthermore, using the orthogonality of $\{u_j^h\}$, we can deduce that
\begin{align}\label{red2}
&\int \Big| A\chi\big(h^{-1}[P_1(h)-E_1(h)]\big) (x,y) \Big|^2 dy \nonumber\\
=& \int \sum_j \chi(h^{-1}[\lambda_j^{(1)}(h)-E_1(h)]) A u_j^h(x) \overline{u_j^h(y)} \cdot \sum_l \chi(h^{-1}[\lambda_l^{(1)}(h)-E_1(h)]) \overline{A u_l^h(x)} u_l^h(y) dy \nonumber \\
=& \sum_j \rho(h^{-1}[\lambda_j^{(1)}(h)-E_1(h)]) A u_j^h(x) \overline{A u_j^h(x)}.
\end{align}
with setting $\rho(t)=(\chi(t))^2$. 

We claim that
\begin{align}\label{upshot}
&\sum_j \rho(h^{-1}[\lambda_j^{(1)}(h)-E_1(h)]) A u_j^h(x) \overline{A u_j^h(x)} \leq 
O(1)\cdot \nonumber \\ &\sup_{\{E_2;\, (E_1,E_2)\in \mathcal{P}(T^*M)\}}\sum_j \rho(h^{-1}[\lambda_j^{(1)}(h)-E_1]) 
\rho(h^{-1}[\lambda_j^{(2)}(h)-E_2])  A u^h_j (x) \overline{A u^h_j(x)}.
\end{align}




Indeed from \cite[Section 2]{Tot} one knows that there exists a constant $C_2>0$ (independent of $j$ and $h$) such that for any $h\in(0, h_0]$, and $(\lambda_j^{(1)}(h), \lambda_j^{(2)}(h))\in \text{Spec}(P_1(h), P_2(h))$, with $|\lambda_j^{(1)}(h)-E_1|\leq C_1 h$,
\begin{equation*}
\inf_{E_2}|\lambda_j^{(2)}(h)-E_2|\leq C_2 h.
\end{equation*}
So, after taking $\epsilon$ in \eqref{spt} sufficiently small there exists a constant $C_3>0$ (independent of $j$ and $h$) such that for all $j\geq 1$ and $h\in(0, h_0]$,
\begin{equation*}
\sup_{\{E_2;\, (E_1,E_2)\in \mathcal{P}(T^*M),\, |\lambda_j^{(1)}(h)-E_1|\leq C_1 h\}}\rho\big(h^{-1}(\lambda_j^{(2)}(h) -E_2)\big)\geq C_3>0.
\end{equation*}
Since the sum on the RHS of \eqref{upshot} has non-negative terms, by restricting to $\{j;\,|\lambda_j^{(1)}(h)-E_1|\leq C_1 \}$ and (after taking $\epsilon$ small enough) using that $\rho(h^{-1}[\lambda_j^{(1)}(h)-E_1])\geq C_4>0$ for these eigenvalues, one gets that
\begin{align*}
\sum_{\{j;\, |\lambda_j^{(1)}(h)-E_1|=O(h)\}} A u^h_j(x) \overline{Au^h_j(x)} \leq 
O(1)\sup_{\{E_2;\, (E_1,E_2)\in \mathcal{P}(T^*M)\}}& \sum_j \rho(h^{-1}[\lambda_j^{(1)}(h)-E_1])\cdot \nonumber \\  
&\rho(h^{-1}[\lambda_j^{(2)}(h)-E_2])  A u^h_j (x) \overline{A u^h_j(x)},
\end{align*}
which can deduce \eqref{upshot}.

With noticing that
\begin{align}\label{sum to integral}
&\sum_j \rho(h^{-1}[\lambda_j^{(1)}(h)-E_1]) \rho \big(h^{-1}[\lambda_j^{(2)}(h)-E_2]\big) A u_j^h(x) \overline{A u_j^h(x)} \nonumber\\
=&  \frac{1}{2\pi} \int \hat\rho(t) \Big(A e^{\frac{i}h t(P_1-E_1)} \rho\big(h^{-1}[P_2(h)-E_2]\big)A^* \Big)(t,x,x) dt,    
\end{align}
and combining \eqref{red1}, \eqref{red2} and \eqref{upshot}, one can apply Lemma \ref{lwl} to get following,

\begin{cor}\label{cor}
Under the same assumption of Lemma \ref{lwl}, if $(p_1|_{\Gamma}, p_2|_{\Gamma})$ with $\Gamma:=\mathcal{C}_x\cap \supp a$ is {\bf rank $1$} at $x$, one has that 
\begin{equation}\label{main cor eq2}
|A(x,hD) u_{h}|^2=O(h^{-n+2}).
\end{equation}
Otherwise, one has the following
\begin{equation}
|A(x,hD) u_{h}|^2=O(1) I_h(x)^2,
\end{equation}
if $(p_1, p_2)$ is Morse type.
\end{cor}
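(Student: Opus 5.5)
The corollary follows by chaining the reductions \eqref{red1}, \eqref{red2}, \eqref{upshot} and \eqref{sum to integral} with Lemma \ref{lwl}.

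First, I will fix $\chi\in\mathcal S(\R)$ as in \eqref{spt} and set $\rho=\chi^2$. Then $\hat\rho=(2\pi)^{-1}\hat\chi*\hat\chi$ is supported in $|t|\le 2\epsilon$, so taking $\epsilon\le\epsilon_0/2$ gives $\rho$ the support properties required in Lemma \ref{lwl}, and $\rho(0)=1$. Since $u_h$ is a joint eigenfunction, $\chi(h^{-1}[P_1-E_1])u_h=u_h$. Cauchy--Schwarz and $\|u_h\|_{L^2}=1$ then give \eqref{red1}. Expanding the kernel in the joint orthonormal eigenbasis $\{u_j^h\}$ and using orthogonality in $y$ gives \eqref{red2}. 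So $|Au_h(x)|^2$ is bounded by the spectral sum $\sum_j\rho(h^{-1}[\lambda^{(1)}_j-E_1])|Au^h_j(x)|^2$.

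Next, I will insert the second cutoff via \eqref{upshot}. This uses the localization result of \cite{Tot}: every joint eigenvalue whose first component lies within $O(h)$ of $E_1$ has second component within $C_2h$ of some admissible $E_2$. I would cover the relevant range of $E_2$ (a compact interval) by $O(h^{-1})$ points spaced $h$ apart. To avoid losing this factor, it is better to argue as in the text. Each index $j$ in the sum is dominated, up to a constant $C_3^{-1}$, by the term with $E_2$ chosen adapted to $j$. Since all terms are nonnegative, one then bounds by a supremum over $E_2$. I expect this to be the delicate step: the supremum must be taken after summing, and for that one uses that the joint spectrum near $E_1$ organizes into $O(1)$ clusters in the second coordinate at scale $h$ (Bohr--Sommerfeld-type lattice structure). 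I would rely on the cited statement from \cite{Tot} as the paper does.

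Finally, the spectral identity \eqref{sum to integral} converts the doubly-cutoff sum into the oscillatory trace
$$\frac1{2\pi}\int\hat\rho(t)\big(Ae^{\frac ih t(P_1-E_1)}\rho(h^{-1}[P_2-E_2])A^*\big)(t,x,x)\,dt.$$
Lemma \ref{lwl} bounds this uniformly in $E_2$, and uniformity holds because all constants in its stationary phase and integration-by-parts arguments depend only on finitely many derivatives of the symbols on the compact set $\Gamma$. The bound is $O(h^{-n+2})$ in the rank $1$ case and $O(1)I_h(x)^2$ in the Morse case. Taking the supremum over $E_2$ and combining with the previous steps yields both assertions of the corollary.
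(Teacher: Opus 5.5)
Your chain of reductions is the paper's. However, the step you flag as delicate, \eqref{upshot}, is false, and the reason you give for it does not hold.

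Dominating each term by $C_3^{-1}\rho(h^{-1}[\lambda^{(2)}_j-E_2])|Au^h_j(x)|^2$ with $E_2$ adapted to $j$ only controls $\sum_j\sup_{E_2}(\cdots)$. Nonnegativity gives $\sup_{E_2}\sum_j(\cdots)\le\sum_j\sup_{E_2}(\cdots)$, which is the wrong direction. Nor does the joint spectrum with $|\lambda^{(1)}_j-E_1|\le Ch$ form $O(1)$ clusters in the second coordinate. The second components spread over the range of $p_2$ on $p_1^{-1}(E_1)$, a set of length $\asymp 1$, so there are $\asymp h^{-1}$ windows of width $h$. That is exactly the $h^{-1}$ loss you noticed.

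No argument can rescue this step. By the small-time parametrix (local Weyl law),
\[
\sum_j\rho(h^{-1}[\lambda^{(1)}_j-E_1])|Au^h_j(x)|^2=(2\pi h)^{-n}h\Big(\int\rho\Big)\int_{\mathcal{C}_x}|a(x,\xi)|^2\frac{d\sigma(\xi)}{|\partial_\xi p_1(x,\xi)|}+O(h^{-n+2}).
\]
This is of exact order $h^{-n+1}$ whenever $a|_{\mathcal{C}_x}\not\equiv0$. By Lemma \ref{lwl}, the right side of \eqref{upshot} is $O(h^{-n+2})$, resp.\ $O(I_h(x)^2)=o(h^{-n+1})$. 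For instance, on flat $\mathbb{T}^2$ with $P_2=hD_{x_2}$, the left side is $\asymp h^{-1}$ while each doubly cut-off sum is $O(1)$. This gap is inherited from the text's \eqref{upshot}: the localization statement from \cite{Tot} does not justify interchanging the sum and the supremum.

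The fix is to put the second cutoff on $u_h$ itself, before Cauchy--Schwarz. Since $u_h$ is a joint eigenfunction with joint eigenvalue $(E_1,E_2)=(E_1(h),E_2(h))$, you have $u_h=\chi(h^{-1}[P_1-E_1])\chi(h^{-1}[P_2-E_2])u_h$. The computation of \eqref{red1}--\eqref{red2} in the joint orthonormal eigenbasis then gives directly
\[
|Au_h(x)|^2\le\sum_j\rho(h^{-1}[\lambda^{(1)}_j-E_1])\,\rho(h^{-1}[\lambda^{(2)}_j-E_2])\,|Au^h_j(x)|^2.
\]
By \eqref{sum to integral}, this is the trace of Lemma \ref{lwl} at this particular $E_2$. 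Your remark that the constants in Lemma \ref{lwl} are uniform in $E_2$ is exactly what is needed next: $E_2$ enters the phase only through $-sE_2$ and so does not affect $\partial_{\theta_j}\Phi$. With that, both assertions follow. Neither \eqref{upshot} nor \cite{Tot} is needed, and the supremum over $E_2$ only expresses uniformity over the sequence of joint eigenfunctions.
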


Furthermore, we can prove that
\begin{lem}\label{lwl 2}
Assume $A(x, hD)\in \Psi^0(M)$ with symbol $a=\sum_{j=0}^\infty a_j h^j$. Let $\rho\in C_0^\infty(\mathbb{R})$ vanish for $|t|\geq \epsilon_0$, $\epsilon_0>0$ small enough, and satisfy $\rho(0)=1$. Let $\Gamma:=\mathcal{C}_x\cap \supp a$. Then if $(p_1|_\Gamma, p_2|_\Gamma, p_3|_\Gamma)$ is {\bf rank $2$} at $x$, one has that
\begin{equation}\label{main lemma 2 eq}
\int_{\mathbb{R}} \hat{\rho}(t) \big(A e^{\frac{i}h t(P_1-E_1)}  \rho\big(h^{-1}[P_2(h)-E_2]\big)\rho\big(h^{-1}[P_3(h)-E_3]\big)  A^*\big)(t,x,x) dt =O(h^{-n+3}).
\end{equation}
\end{lem}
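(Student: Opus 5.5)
The plan is to repeat the proof of Lemma \ref{lwl}, with one extra Fourier variable for the additional cutoff. First write
$$\rho\big(h^{-1}[P_3(h)-E_3]\big)=\frac{1}{2\pi}\int_{\mathbb{R}}\hat\rho(\sigma)e^{\frac{i}{h}\sigma(P_3(h)-E_3)}\,d\sigma .$$
Then represent $e^{\frac{i}{h}\sigma(P_3-E_3)}$ by a Fourier integral operator with phase $\varphi_3(\sigma,w_2,\xi_3)$. This phase solves the eikonal equation $\partial_\sigma\varphi_3=p_3(w_2,\partial_{w_2}\varphi_3)$ with $\varphi_3(0,w_2,\xi_3)=\langle w_2,\xi_3\rangle$, so that
$$\varphi_3=\langle w_2,\xi_3\rangle+\sigma p_3(w_2,\xi_3)+O(\sigma^2).$$
The on-diagonal kernel then becomes an oscillatory integral analogous to \eqref{main kernel}, with the additional variables $(w_2,\xi_3,\sigma)$.

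Next, eliminate the spatial and dual variables pairwise by stationary phase, in the same order as in Lemma \ref{lwl}: first $(y_1,\xi)$, then $(y_2,\eta)$, then $(w_1,\xi_2)$, and finally $(w_2,\xi_3)$. The critical points are perturbations of order $O(s)$, $O(\sigma)$ of the identity, and each pair contributes a factor $h^{n}$. This reduces the expression to
$$h^{-n}\iiiint \hat\rho(t)\hat\rho(s)\hat\rho(\sigma)\,e^{\frac{i}{h}\left[t(p_1-E_1)+s(p_2-E_2)+\sigma(p_3-E_3)+O(t^2+s^2+\sigma^2)\right]}\,c\,|a|^2\,d\xi_1\,dt\,ds\,d\sigma .$$
Pass to polar coordinates $\xi_1=r\Theta$ and do stationary phase in $(r,t)$, exactly as in \eqref{r}–\eqref{d t_c}. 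This gives $t_c=O(|s|+|\sigma|)$ and $\partial_\Theta t_c=O(|s|+|\sigma|)$, and leaves
$$I=h^{-n+1}\iiint\hat\rho(s)\hat\rho(\sigma)\,e^{\frac{i}{h}\Phi(s,\sigma,\Theta)}\,c_5\,d\Theta\,ds\,d\sigma,$$
$$\Phi=s(p_2-E_2)+\sigma(p_3-E_3)+O\big((|s|+|\sigma|)^2\big).$$

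The key step is the analysis of $\Theta$ in the region $|(s,\sigma)|>h$. Write $(s,\sigma)=\tau\omega$ with $\tau=|(s,\sigma)|$ and $\omega\in S^1$. Then $\partial_\Theta\Phi=\tau\big(\omega_1\partial_\Theta p_2+\omega_2\partial_\Theta p_3\big)+O(\tau\epsilon_0)$. Since $\partial_\Theta p_2$ and $\partial_\Theta p_3$ are linearly independent on $\Gamma$ (rank $2$), the vector $\omega_1\partial_\Theta p_2+\omega_2\partial_\Theta p_3$ has norm $\geq C>0$ uniformly in $\omega$, by compactness of $\supp a\cap\mathcal{C}_x$ and of $S^1$. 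Hence $|\nabla_\Theta\Phi|\geq C\tau$, and $|\partial_\Theta^\alpha\Phi|\leq C_\alpha\tau$.

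Locally in $\Theta$ and $\omega$, choose a direction $v(\omega,\Theta)$ with $|v\cdot\nabla_\Theta\Phi|\geq C\tau$. Integrating by parts $N=3$ times using the operator $\frac{h}{i\,\partial_v\Phi}\partial_v$ gives $O\big((h/\tau)^3\big)$ for the $\Theta$-integral. For the region $|(s,\sigma)|\le h$, the trivial bound $h^{-n+1}\cdot h^2=h^{-n+3}$ is used instead. Then
$$\int_{h<\tau<1}(h/\tau)^3\,\tau\,d\tau=O(h^2),$$
so in total $|I|\leq Ch^{-n+1}\cdot h^{2}=O(h^{-n+3})$. A smooth partition of unity in $(\omega,\Theta)$ is needed so that the integrating direction depends smoothly on the variables. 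Alternatively, one can use the vector field $L=\frac{h}{i|\nabla_\Theta\Phi|^2}\nabla_\Theta\Phi\cdot\nabla_\Theta$ directly; its transpose has coefficients bounded by $h/\tau$ along with all their derivatives, by the two-sided bounds on $\Phi$.

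I expect the main obstacle to be controlling the error terms uniformly in the two-parameter family $(s,\sigma)$. The reduction requires the remainder $O((|s|+|\sigma|)^2)$ and the $t_c$-contributions to have $\Theta$-derivatives bounded by $O(\epsilon_0\tau)$, so that the lower bound $|\nabla_\Theta\Phi|\geq C\tau$ survives. This is where the smallness of $\supp\rho$ (i.e. $\epsilon_0$) and the analogue of \eqref{d t_c} for the $\sigma$-variable must be checked. A second point to check is that the phase of the composition depends only on the commuting symbols in the stated way, which is where $\{p_i,p_j\}=0$ enters.
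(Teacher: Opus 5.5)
Your proof is correct. Through the reduction it coincides with the paper's: the same Fourier representation, the same order of stationary phase in $(y_1,\xi)$, $(y_2,\eta)$, $(w_1,\xi_2)$, $(w_2,\xi_3)$, and then $(r,t)$. The final non-stationary-phase step, however, is genuinely different.

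The paper cuts the $(s_1,s_2)$-plane into four rectangles according to $|s_i|\lessgtr h$. On $\{h<|s_1|,|s_2|<1\}$ it asserts coordinate directions $\theta_j\neq\theta_k$ with $|\partial_{\theta_j}\Phi|\gtrsim|s_1|$ and $|\partial_{\theta_k}\Phi|\gtrsim|s_2|$ separately (\eqref{rank2 1}--\eqref{rank2 2}). It then integrates by parts twice in each direction and bounds by $h^{-n+5}\int(s_1s_2)^{-2}\,ds_1\,ds_2$; the mixed rectangles are only declared similar.

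You instead write $(s,\sigma)=\tau\omega$ and use only the joint bound $|\nabla_\Theta\Phi|\ge C\tau$. This is exactly what linear independence of $\partial_\Theta p_2,\partial_\Theta p_3$, together with compactness in $(\omega,\Theta)$, gives. Three integrations by parts yield $(h/\tau)^3$, and $N=3$ is the right count, since $N=2$ would leave $h^2|\log h|$.

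Your route is the more robust one. The paper's decoupled lower bounds do not follow from rank $2$ alone: $\partial_{\theta_j}\Phi$ also contains $s_2\partial_{\theta_j}p_3$ and remainders such as $O(s_2^2)$, which can dominate $|s_1|$ when $|s_2|\gg|s_1|$. Your estimate is uniform in the ratio $s/\sigma$ and needs no separate treatment of mixed regions.

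It also extends verbatim to rank $k$, with $k+1$ integrations by parts and $\int_h^1(h/\tau)^{k+1}\tau^{k-1}\,d\tau=O(h^k)$. That is exactly the induction behind Theorem~\ref{trace}, whereas the paper's product structure just copies the rank-one computation.

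One caveat you share with the paper: the bounds $|s|,|\sigma|\lesssim\epsilon_0$ require $\hat\rho$, not $\rho$, to be supported in $[-\epsilon_0,\epsilon_0]$. This is what is actually arranged via \eqref{spt}, even though the lemma's statement puts the support condition on $\rho$.
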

\begin{proof}
With repeating the begining of the proof of Lemma \ref{lwl}, one need to show that the right hand of \eqref{main lemma 2 eq} can bound
\begin{align}
I(x,x;h):=h^{- 6n}\int &\hat{\rho}(s_2)\hat{\rho}(s_1)\hat{\rho}(t) \exp[\frac{i}h \left<x-y_1, \xi\right>] \exp[\frac{i}h \Phi(s_1, s_2,t,y_1, w_1, w_2, y_2, \xi_1, \xi_2, \xi_3)]\cdot  \nonumber \\
& \exp[\frac{i}h \left<y_2-x, \eta\right>] c_0(s_1, s_2,t,y_1,w_1,w_2,y_2,\xi_1,\xi_2,\xi_3;h) a(x, \xi) \cdot \nonumber \\
 &\qquad\qquad \overline{a(x, \eta)} dy_1 d\xi dw_1 d\xi_1 dw_2 d\xi_2 d\xi_3 dy_2 d\eta dt ds_1 ds_2,
\end{align}
here $c_0\sim \sum_{j=0}^\infty c_{0,j} h^j$ with $c_{0,j}\in C^\infty$ and $c$ is supported near the energy shell $\{p_1=E_1\}\cup\{p_2=E_2\}\cup\{p_3=E_3\}$, 
\begin{align*}
\Phi(\cdot)=&\left<y_1-w_1,\xi_1 \right>+\left<w_1-w_2, \xi_2\right>+\left<w_2-y_2, \xi_3\right> +t\big(p_1(y_1,\xi_1)-E_1\big)+s_1\big( p_2(w_1,\xi_2)-E_2 \big)\nonumber \\
&+s_2\big( p_3(w_2,\xi_3)-E_3 \big)+O_{y_1,\xi_1}(t^2)+O_{w_1,\xi_2}(s_1^2)+O_{w_2,\xi_3}(s_2^2).
\end{align*}

Now apply stationary phase in $(y_1, \xi)$-variables. The critical point equations for $(y_1, \xi)$ are
\begin{align}
\xi&=\xi_1+ t\partial_{y_1} p_1(y_1,\xi_1)+ O(t^2), \\
y_1&=x.
\end{align}

With applying Taylor expansion at the critical point $(y_{1c}, \xi_{c})$ one has that
\begin{align}
&I (x,x; h):=h^{-4n}\int \hat{\rho}(s_2)\hat{\rho}(s_1)\hat{\rho}(t)  \exp\Big[\frac{i}h\Big(\left<x-w_1, \xi_1\right>+\left<w_1-w_2,\xi_2\right>+\left<w_2-y_2,\xi_3\right> \nonumber \\
&\qquad+\left<y_2-x, \eta\right> +t(p_1(x,\xi_1)-E_1\big)+ s_1\big(p_2(w_1,\xi_2)-E_2\big) +s_2\big( p_3(w_2,\xi_3)-E_3 \big)+O_{x,\xi_1}(t^2) \nonumber\\
&\qquad\qquad + O_{w_1,\xi_2}(s_1^2)+O_{w_2,\xi_3}(s_2^2) \Big)\Big]\cdot c_1(s_1, s_2,t,x, w_1,w_2,y_2,\xi_1,\xi_2,\xi_3;h)[a(x, \xi_1)+o(1)]  \nonumber\\
&\qquad\qquad\qquad \cdot \overline{a(x, \eta)} dw_1 d\xi_1 dw_2 d\xi_2 d\xi_3 dy_2 d\eta dt ds_1 ds_2, 
\end{align}
here $c_1\sim \sum_{j=0}^\infty c_{1,j} h^j$ and $c_{1,j}\in C^\infty$.

Apply stationary phase in $(y_2, \eta)$-variables. The critical point equations for $(y_2, \eta)$ are
\begin{align}
\eta&=\xi_3, \\
y_2&=x.
\end{align}
With applying stationary phase at the critical point $(y_{2c}, \eta_{c})$ and Taylor expansion one has that
\begin{align*}
I (x,x; h)=&h^{-3n}\int \hat{\rho}(s_2)\hat{\rho}(s_1)\hat{\rho}(t)  \exp\Big[\frac{i}h\Big(\left<x-w_1, \xi_1\right>+\left<w_1-w_2,\xi_2\right>+\left<w_2-x,\xi_3\right> \nonumber \\
&\quad +t(p_1(x,\xi_1)-E_1\big)+ s_1\big(p_2(w_1,\xi_2)-E_2\big) +s_2\big( p_3(w_2,\xi_3)-E_3 \big)+O_{x,\xi_1}(t^2) \nonumber\\
&\quad\quad + O_{w_1,\xi_2}(s_1^2)+O_{w_2,\xi_3}(s_2^2) \Big)\Big]\cdot c_2(s_1, s_2,t,x, w_1,w_2,\xi_1,\xi_2,\xi_3;h) [a(x, \xi_1)+o(1)]  \nonumber\\
&\quad\qquad\quad \cdot \overline{a(x, \xi_3)} dw_1 d\xi_1 dw_2 d\xi_2 d\xi_3 dt ds_1 ds_2, 
\end{align*}
here $c_2\sim \sum_{j=0}^\infty c_{2,j} h^j$ and $c_{2,j}\in C^\infty$.

Apply stationary phase in $(w_1, \xi_2)$-variables. The critical point equations for $(w_1,\xi_2)$ are
\begin{align}
\xi_2&=\xi_1-s_1\partial_{w_1} p_2(w_1,\xi_2)+ O_{w_1,\xi_2}(s_1^2), \\
w_1&=w_2-s_1\partial_{\xi_2} p_2(w_1,\xi_2)+ O_{w_1, \xi_2}(s_1^2).
\end{align}

Hence with applying Taylor expansion one can get that
\begin{align*}
I (x,x; h)=&h^{-2n}\int \hat{\rho}(s_2)\hat{\rho}(s_1)\hat{\rho}(t)  \exp\Big[\frac{i}h\Big(\left<x-w_2, \xi_1-\xi_3\right>+t(p_1(x,\xi_1)-E_1\big)+ \nonumber \\
&  s_1\big(p_2(w_2,\xi_1)-E_2\big)+s_2\big( p_3(w_2,\xi_3)-E_3 \big)+O_{x,\xi_1}(t^2)+ O_{w_2,\xi_1}(s_1^2)+O_{w_2,\xi_3}(s_2^2) \Big)\Big] \nonumber\\
&\qquad \cdot c_3(s_1,s_2,t,x,w_2,\xi_1, \xi_3;h)[a(x, \xi_1)+o(1)] \overline{a(x, \xi_3)} d\xi_1 dw_2 d\xi_3 dt ds_1 ds_2,
\end{align*}
here $c_3\sim \sum_{j=0}^\infty c_{3,j} h^j$ and $c_{3,j}\in C^\infty$.

Next apply stationary phase in $(w_2, \xi_3)$-variables. The critical point equations for $(w_2,\xi_3)$ are
\begin{align}
\xi_3&=\xi_1-s_1\partial_{w_2} p_2(w_2,\xi_1)-s_2\partial_{w_2} p_3(w_2,\xi_3)+O_{w_2,\xi_1}(s_1^2)+ O_{w_2,\xi_3}(s_2^2),\\
w_2&=x-s_2\partial_{\xi_3} p_3(w_2,\xi_3)+ O_{w_2,\xi_3}(s_2^2).
\end{align}

So applying Taylor expansion one can get that
\begin{align}
I (x,x; h)=&h^{-n}\int \hat{\rho}(s_2)\hat{\rho}(s_1)\hat{\rho}(t)  \exp\Big[\frac{i}h\Big(t(p_1(x,\xi_1)-E_1\big)+ s_1\big(p_2(x,\xi_1)-E_2\big)+ \nonumber \\
&\qquad  s_2\big( p_3(x,\xi_1)-E_3 \big)+O_{x,\xi_1}(t^2)+O_{x,\xi_1}(s_1 s_2)+ O_{x,\xi_1}(s_1^2)+O_{x,\xi_1}(s_2^2) \Big)\Big] \nonumber\\
&\qquad\qquad \cdot c_4(s_1,s_2,t,x,\xi_1;h) [a(x, \xi_1)+o(1)] [\overline{a(x, \xi_1)}+o(1)] d\xi_1 dt ds_1 ds_2,
\end{align}
here $c_4\sim \sum_{j=0}^\infty c_{4,j} h^j$ and $c_{4,j}\in C^\infty$.

Now use the geodesic normal coordinate about $x$ and make the change of variables $\xi_1=r\Theta$, where $r=|\xi_1|$ and $\Theta=(\theta_1,\theta_2,\dots,\theta_{n-1})\in S^{n-1}$. One can get
\begin{align}\label{tilde I'}
I (x, x; h)=& h^{-n}\int \hat{\rho}(s_2)\hat{\rho}(s_1)\hat{\rho}(t)  \exp\Big[\frac{i}h\Big( t (r^2-1\big)+s_1\big(p_2(x, r\Theta)-E_2\big)+s_2\big( p_3(x,r\Theta)-E_3 \big) \nonumber\\ 
&+O_{x,\xi_1}(t^2)+O_{x,\xi_1}(s_1 s_2)+ O_{x,\xi_1}(s_1^2)+O_{x,\xi_1}(s_2^2) \Big)\Big]c_4(s_1,s_2,t, x,r\Theta;h)\nonumber \\
&\quad\cdot[a(x, r\Theta)+o(1)] [\overline{a(x, r\Theta)}+o(1)] r^{n-1} dr d\Theta dt ds_1 ds_2.
\end{align}

Again one would like to apply stationary phase in $(r, t)$-variables. The critical point $(r_c,t_c)$ satisfies that
\begin{align}
2r t  +s_1\Theta\cdot \partial_{\xi_1} p_2(x, r\Theta)+s_2\Theta\cdot \partial_{\xi_1} p_3(x, r\Theta)+O_{x,\xi_1}(t^2)+O_{x,\xi_1}(s_1 s_2)\qquad\qquad & \nonumber \\
+ O_{x,\xi_1}(s_1^2)+O_{x,\xi_1}(s_2^2)&=0, \label{2r}\\
(r^2-1)+O_{x,\xi_1}(t)&=0. \label{2E_1}
\end{align}

Taking $\epsilon_0$ small enough in the definition of $\rho$ and combining \eqref{2r} and \eqref{2E_1}, we can get that
\begin{equation}\label{t_c 2}
t_c=O(s_1)+O(s_2).
\end{equation}

Taking derivative about $\theta_j$ in \eqref{2r}, \eqref{2E_1} and combining \eqref{2E_1} one can derive that
\begin{equation}\label{d t_c 2}
\partial_{\theta_j} t_c =O(s_1)+O(s_2).
\end{equation}

Now performing stationary phase at the critical point $\big(r_c, t_c \big)$ in \eqref{tilde I'} and Taylor expansion gives,
\begin{align}
I (x, x; h)=&h^{-n+1}\int \hat{\rho}(s_1)\hat{\rho}(s_2)  \exp\Big[\frac{i}h \Big(s_1\big(p_2(x, \Theta)-E_2\big)+s_2\big(p_3(x, \Theta)-E_3\big)+O_{x,\Theta}(s_1t_c) \nonumber\\ 
& +O_{x,\Theta}(s_2t_c)+O_{x,\Theta}(t_c^2)+O_{x,\Theta}(s_1 s_2)+O_{x,\Theta}(s_1^2)+O_{x,\Theta}(s_2^2)\Big) \Big] c_5(s_1, s_2, x, \Theta; h)\nonumber\\
&\cdot [a(x, \Theta)+o(1)] [\overline{a(x, \Theta)}+o(1)] d\Theta ds_1 ds_2,
\end{align}
here $c_5\sim \sum_{j=0}^\infty c_{5,j} h^j$ and $c_{5,j}\in C^\infty$.

Then one can separate the integration
\begin{align*}
\Big| I (x, x; h) \Big|& \leq \Big|h^{-n+1}\int_{|s_1|,|s_2|\leq h} \int \hat{\rho}(s_1)\hat{\rho}(s_2)  \exp[\frac{i}h\Phi(s_1,s_2,z,\Theta) ] \cdot c_6(s_1, s_2, x, \Theta; h)d\Theta ds_1 ds_2 \Big| \nonumber \\
+&\Big|h^{-n+1}\int_{|s_1|\leq h,h<|s_2|<1}\int \cdot\, d\Theta ds_1ds_2\Big|+\Big|h^{-n+1}\int_{h<|s_1|<1,|s_2|\leq h}\int \cdot\, d\Theta ds_1ds_2\Big| \nonumber\\
+& \Big|h^{-n+1}\int_{h<|s_1|,|s_2|<1}\int \hat{\rho}(s_1)\hat{\rho}(s_2)  \exp[\frac{i}h\Phi(s_1,s_2,z,\Theta) ] \cdot c_6(s_1,s_2, x, \Theta; h)d\Theta ds_1ds_2\Big| \nonumber\\
&:=\Big| {I}_1 (x; h)\Big| + \Big| {I}_2 (x; h) \Big|+ \Big| {I}_3 (x; h) \Big|+ \Big| {I}_4 (x; h) \Big|,
\end{align*}
here we set $\Phi(s_1,s_2,x,\Theta)= s_1\big(p_2(x, \Theta)-E_2\big)+s_2\big(p_3(x, \Theta)-E_3\big)+O_{x,\Theta}(s_1t_c) +O_{x,\Theta}(s_2t_c)+O_{x,\Theta}(t_c^2)+O_{x,\Theta}(s_1 s_2)+O_{x,\Theta}(s_1^2)+O_{x,\Theta}(s_2^2)$ and $c_6(s_1,s_2, x, \Theta; h)=c_5(s_1, s_2, x, \Theta; h) [a(x, \Theta)+o(1)] [\overline{a(x, \Theta)}+o(1)]$.

We only focus on $\Big| {I}_1 (x; h) \Big|$ and $\Big| {I}_4 (x; h) \Big|$ since the rest can be dealt with similarly. It's straightforward to get that 
\begin{equation}\label{last1'}
\Big| {I}_1 (z; h)\Big|=O(h^{-n+3}).
\end{equation}

For $\Big| {I}_4 (x; h) \Big|$, 
with the help of \eqref{t_c 2} and \eqref{d t_c 2} the key observation is that for some $1\leq j\neq k\leq n-1$
\begin{align}
\big| \partial_{\theta_j} \Phi(s_1,s_2,x,\Theta) \big|&\geq|s_1| \big(\big|\partial_{\theta_j} p_2(x, \Theta)\big|-O(\epsilon_0)\big)>C_1|s_1|>0, \label{rank2 1}\\
\big| \partial_{\theta_k} \Phi(s_1,s_2,x,\Theta) \big|&\geq|s_2| \big(\big|\partial_{\theta_k} p_3(x, \Theta)\big|-O(\epsilon_0)\big)>C_2|s_2|>0 \label{rank2 2}
\end{align}
due to the {\bf rank $2$ condition}  and the definition of $\rho$.

Also with the help of \eqref{t_c 2} and \eqref{d t_c 2} one has that
\begin{equation}\label{s1s2}
\big|\partial_{\Theta}^\alpha \Phi(s_1, s_2, x,\Theta)\big|=O(s_1)+O(s_2), \quad\text{for a given multi-index } \alpha.
\end{equation}

Hence, one can adapt the argument from the end of Lemma \ref{lwl}: integrate by parts twice with respect to variables $\theta_j$ and $\theta_k$, then apply equation \eqref{s1s2} to conclude that
\begin{align}\label{last2'}
\Big| {I}_4 (x; h) \Big| \leq C h^{-n+5}  \Big|\int_{h<|s_1|,|s_2|<1}\frac{1}{s^2_1 s^2_2}  ds_1 d_2 \Big|=O(h^{-n+3}) .
\end{align}

\end{proof}

Following a similar argument after Lemma \ref{lwl} and combining Lemma \ref{lwl 2}, one can inductively obtain Theorem \ref{trace}.

\bigskip

\section{Proof of Theorem \ref{main1}}\label{section3}
The proof of \eqref{main1 eq2} is an application of Theorem \ref{trace}. Hence one only need to focus on \eqref{main1 eq} and the proof is mainly inspired by \cite{STZ11} with help of the results in Section \ref{section2}. We first introduce some notation. One sets $L_\delta(x,\xi)$, if it exists, is the length of shortest loop such that $\text{dist}(G^{\ell}_x(\xi),\xi)\leq \delta$, with $\text{dist}(\cdot, \cdot)$ being the distance induced by the metric, $\xi$ is the initial direction and $G^{\ell}_x(\xi)$ is the terminal direction of all loop with a given length $\ell$. Let $\mathcal{R}_x^\delta$ be all the $\xi\in \bigcap\limits_{2\leq j \leq n}\{|p_j(x,\xi)-E_j|\leq \epsilon\}$ satisfying $L_\delta(x,\xi)$ is finite.

Without generality, we work on a coordinate patch of a point $x$, which we identified with an open subset of $\mathbb{R}^n$. By continuity, one can choose an open set $U$ such that $\Gamma\subsetneqq U$ and $\mathcal{P}|_{U}$ is rank $k$ ($k\geq 2$ if $n>3$) at $x$, with a reminder that $\Gamma=\mathcal{R}_x\cap \bigcap\limits_{2\leq j \leq n}\{|p_j(x,\xi)-E_j|\leq \epsilon\}$. Then we fix a small number $\delta>0$ so that 
\begin{equation}
\Gamma\subset \mathcal{R}_x^\delta\subset U.
\end{equation}

For given a large number $T$ (to be specified later), applying $C^\infty$ Urysohn lemma, there exists a function $b\in C^\infty(S^{n-1})$ with range $[0,1]$ such that 
\begin{equation}
\mathcal{R}_x^\delta\subset \supp{b}\subset U,
\end{equation}
and
\begin{equation}
    L_\delta(x,\xi)\geq 2T \quad \text{ for } \forall \xi \in \supp B,
\end{equation}
where $B(\xi)=1-b(\xi)$. 

From Theorem \ref{trace} one has that
\begin{equation}
|b(x, hD)u_h(x)|=o(1)I_n(h).
\end{equation}

So one only need to deal with $|B(x, hD) u_h(x)|$. We construct a smooth partition of unity $1=\sum_k \psi_k(\xi)$ of $S^{n-1}$ which consists of $O(\delta^{-(n-1)})$ terms such that each term has range in $[0,2]$ and is supported in a spherical cap of diameter smaller than $\delta/10$. Let $B_k(x,hD)$ be the zero-order h-pseudo-differential operator whose symbol is $B(x,\xi)\psi_k(\xi)$. Hence
\begin{equation}\label{terms}
    |B(x, hD) u_h(x)| \leq \sum_k \Big| B_k u_h (x) \Big|.
\end{equation}

By Cauchy-Schwarz inequality and using the orthogonality of $\{u_j^h\}$, one has that
\begin{align}
&\Big| B_k u_h (x) \Big|^2 \nonumber \\
=&\Big| \big(B_k \chi \big(T h^{-1}[P_1(h)-E_1]\big) u_h\big) (x) \Big|^2 \nonumber\\
=&  \Big| \int \Big(B_k\chi\big(T h^{-1}[P_1(h)-E_1]\big)\Big) (x,y) u_h(y) dy  \Big|^2  \nonumber \\
\leq &  \int \Big| B_k\chi\big(Th^{-1}[P_1(h)-E_1]\big) (x,y) \Big|^2 dy \, \|u_h(y)\|^2_{L^2(M)} \nonumber\\
=& \int \sum_j \chi(Th^{-1}[\lambda_j^{(1)}(h)-E_1]) B_ku_j^h(x) \overline{u_j^h(y)} \sum_l \chi(Th^{-1}[\lambda_l^{(1)}(h)-E_1]) \overline{B_k u_l^h(x)} u_l^h(y) dy \nonumber\\
=& \sum_j \rho(Th^{-1}[\lambda_j^{(1)}(h)-E_1]) B_k u_j^h(x) \overline{B_k u_j^h(x)} \nonumber\\
=& T^{-1} \int \hat\rho(t/T) (B_k e^{\frac{i}h t(P_1-E_1)} B_k^*)(t,x,x) dt,
\end{align}
with setting $\rho(t)=(\chi(t))^2$.

Now one can insert a new cut-off function $\hat\beta(t)\in C_0^\infty(\mathbb{R})$ satisfying $\hat\beta(t)=1$, $|t|<1$ and $\hat\beta(t)=0$, $|t|>2$ to get that
\begin{align}
\Big| B_k u_h (x) \Big|^2 &\leq  T^{-1} \int \big(1-\hat\beta(t)\big)\hat\rho(t/T) (B_k e^{\frac{i}h t(P_1-E_1)}B_k^*)(t,x,x) dt \nonumber\\
&\qquad\qquad + T^{-1} \int \hat\beta(t)\hat\rho(t/T) (B_k e^{\frac{i}h t(P_1-E_1)}B_k^*)(t,x,x) dt \nonumber\\
&=:I(x,x)+II(x,x).
\end{align}

Following \cite{STZ11} one has that
\begin{equation}\label{BI}
I(x,x) =O(h^\infty).
\end{equation}

For the second part, with setting $\hat\alpha(t)=\hat\beta(t)\hat\rho(t/T)$, one notices that
\begin{align*}
II(x,x)&=T^{-1} \int \hat\beta(t)\hat\rho(t/T) (B_ke^{\frac{i}h t(P_1-E_1)} B_k^*)(t,x,x) dt \\
&=T^{-1} \sum_j \alpha(h^{-1}[\lambda_j^{(1)}(h)-E_1]) B_k u_j^h(x) \overline{B_k u_j^h(x)}.
\end{align*}

Following the same argument of \eqref{upshot} and \eqref{sum to integral}, one can apply Lemma \ref{lwl} to get that
\begin{equation}\label{BII}
II(x,x) \leq C T^{-1} I_h(x)^2.
\end{equation}
Combining \eqref{terms}, \eqref{BI} and \eqref{BII} with  noticing that there are $O(\delta^{-(n-1)})$ terms, one can take $T$ sufficient large to get that
\begin{equation}
    |B(x, hD)u_h(x)|=o(1)I_n(h).
\end{equation}

\bigskip

\section{Examples}\label{section4}

\subsection{Surface of revolutions}
We study the surface of revolution as an example to demonstrate several points mentioned in the Introduction. Proposition \ref{SOR degen} shows the geometric meaning of the rank $1$ condition in this simple case; Proposition \ref{SOR recur} and \ref{SOR quasi} clarifies the point made in Remark (1) of Theorem \ref{main1}. We believe that some constructions can be generalized to Riemannian warped products.

In general, a surface of revolution embedded in $\R^3$ is given by parametrization
\begin{equation}\label{SoR parametrization}
    \begin{cases}
        x=f(t)\cos\phi\\
        y=f(t)\sin\phi\\
        z=g(t),
    \end{cases}
\end{equation}
for $t\in [t_-,t_+]$ and $\phi\in [0,2\pi]$. Its first fundamental form is 
\begin{align*}
    E&=e_t\cdot e_t=f'(t)^2+g'(t)^2\\
    F&=e_t\cdot e_\phi=0\\
    G&=e_\phi\cdot e_\phi=f^2(t)
\end{align*}
and we want to arrange for $f'(t)^2+g'(t)^2=1$, so that the metric is $ds^2=dt^2+f^2(t)d\phi^2$. We call this metric $g$, not to be confused with the above function $g$. One can compute the Laplacian in the coordinate system $(t,\phi)$,
\begin{align}
    \Delta &=\frac{1}{\sqrt{|g|}}\sum\partial_j(\sqrt{|g|}\,g^{jk}\partial_k)\nonumber\\
    &=\partial_t^2+f^{-2}(t)\partial_\phi^2+\frac{f'(t)}{f(t)}\partial_t.
\end{align}
The principal symbol is then $|\xi|^2_g=\xi_t^2+f^{-2}(t)\xi_{\phi}^2$. The quantum integrable system is defined by $\mathcal{P}=(p_1,p_2)$, where $p_1=|\xi|^2_g$, $p_2=\xi_{\phi}$.

We will make extra assumptions in order to make the problem simpler. We choose $f$ such that $f(t_-)=f(t_+)=0$ and assume that $f$ makes the metric smooth at the poles. We also assume that $f''(t_0)\neq 0$ for the $t_0$ where $f'(t_0)=0$. The assumption guarantees that the set $\{(t,\phi)\in M:f'(t)=0\}$ consists of disjoint union of horizontal circles. For simplicity, we may assume there is only one such circle, which we refer to as the \textbf{equator}.

There is a simple characterization of the rank $1$ condition in this case.

\begin{prop}\label{SOR degen}
    Let $M$ be a surface of revolution as above, and $x\in M$ not a pole nor on the equator. Then $\mathcal{P}$ satisfies the rank $1$ condition on $\Gamma=\{(\xi_t,\xi_{\phi})\in S^*_xM: \xi_t\neq 0\}$ at $x$.
\end{prop}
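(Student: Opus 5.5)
The plan is to make Definition \ref{def} concrete in this two-dimensional setting. Here $n=2$, so the rank $1$ condition asks that the tangential derivative of $p_2=\xi_\phi$ along the curve $\mathcal{C}_x=S^*_xM$ be nonzero at every $\xi\in\Gamma$. I would work in the coordinates $(t,\phi)$, with $x=(t_x,\phi_x)$ and $f(t_x)\neq0$ since $x$ is not a pole. Then the fiber is the ellipse
$$\mathcal{C}_x=\{(\xi_t,\xi_\phi):\ \xi_t^2+f(t_x)^{-2}\xi_\phi^2=1\},$$
and the energy level of $p_1=|\xi|_g^2$ is $E_1=1$. I would parametrize it by
$$\xi_t=\cos\theta,\qquad \xi_\phi=f(t_x)\sin\theta ,$$
so that the restriction is $p_2|_{\mathcal{C}_x}(\theta)=f(t_x)\sin\theta$.

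The key step is to differentiate along the fiber:
$$\partial_\theta\bigl(p_2|_{\mathcal{C}_x}\bigr)=f(t_x)\cos\theta=f(t_x)\,\xi_t .$$
Since $f(t_x)\neq 0$, this derivative vanishes exactly when $\xi_t=0$. So on $\Gamma=\{\xi_t\neq0\}$ the span of $\partial_\xi p_2$ in $T_\xi\mathcal{C}_x$ is one-dimensional, which is the rank $1$ condition. An equivalent way to phrase the same fact is that $dp_1$ and $dp_2$, restricted to the fiber $T^*_xM$, are linearly independent iff $\det\begin{pmatrix}2\xi_t & 2f^{-2}\xi_\phi\\ 0&1\end{pmatrix}=2\xi_t\neq0$. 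By Lagrange multipliers, this says $p_2|_{\mathcal{C}_x}$ has nonvanishing derivative precisely there.

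I would also note how the hypotheses enter. The pole exclusion is what is actually used: it gives $f(t_x)\neq0$, so the coordinates are valid and the factor $f(t_x)$ does not vanish. The equator exclusion is not needed for this computation. It is presumably included because on the equator the points $\xi_t=0$ are tied to degenerate tori, since $f'(t_0)=0$ makes the circle a geodesic. It also links up with the later propositions. I would remark on this rather than use it.

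The main subtlety is not any estimate but the interpretation of ``$\partial_\xi$ along $\mathcal{C}_x$'' in Definition \ref{def}. It has to be taken as the tangential gradient, not the ambient one $(0,1)$, which never vanishes. Once the tangential reading is fixed, the proof is the one-line computation above.
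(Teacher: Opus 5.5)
Your proposal is correct and follows the paper's proof. Both restrict $p_2=\xi_\phi$ to the ellipse $\mathcal{C}_x$ and observe that it is critical exactly where $\xi_t=0$; your parametrization $\xi_t=\cos\theta$, $\xi_\phi=f(t_x)\sin\theta$ makes the paper's one-line claim explicit, and your remark that only $f(t_x)\neq 0$ is used, not the equator exclusion, also holds for the paper's argument.
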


\begin{proof}
    At $x=(t,\phi)$, the coshpere is an ellipse $\mathcal{C}_x=\{\xi:\xi_t^2+\frac{\xi_{\phi}^2}{f^2(t)}=1\}$. Then, ${p_2}|_{\mathcal{C}_x}=\xi_{\phi}|_{\mathcal{C}_x}$ is critical only if $\xi_{t}=0$, that is, when the covector is vertical. Then, $\mathcal{P}$ satisfies the rank $1$ condition on $\{\xi_t\neq 0\}$ at $x$.  
\end{proof}

\begin{rem}
    Note that the trick in Section \ref{section superint} cannot be used here.
\end{rem}

The next proposition characterizes the recurrent set of certain surfaces of revolution.

\begin{prop}\label{SOR recur}
    Let $M$ be a surface of revolution as above with a real-analytic profile function $f$, and that $g$ is not a Zoll metric. Let $x\in M$ be a point on the equator, then the recurrent set $\mathcal{R}_x$ is a dense subset in $S^*_xM$ and $\mu(\mathcal{R}_x)=0$, where $\mu$ is the induced Liouville measure on $S^*_xM$.
\end{prop}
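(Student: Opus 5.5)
The plan is to use the Clairaut integral to reduce the geodesic dynamics from the equator to a single rotation function, and then read off both density and measure zero from the analyticity and non-constancy of that function.

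\emph{Reduction to a rotation function.} Write $f_0=f(t_0)$, where $t_0$ is the equator. Since $f(t_\pm)=0$ and $f'$ vanishes only at $t_0$, $f_0$ is the maximum of $f$. A covector $\xi\in S^*_xM$ is determined by its Clairaut constant $c=\xi_\phi\in[-f_0,f_0]$ together with the sign of $\xi_t$, and $c=\xi_\phi$ is constant along the flow. For $|c|<f_0$ the geodesic oscillates between the parallels $t_\pm(c)$ defined by $f(t)=|c|$, with $t_-(c)<t_0<t_+(c)$. It crosses the equator periodically, and each crossing has the same $c$, with the sign of $\xi_t$ alternating. Over one full oscillation the azimuth advances by
\[
\Theta(c)=2\int_{t_-(c)}^{t_+(c)}\frac{c\,dt}{f(t)\sqrt{f(t)^2-c^2}},
\]
and over a half oscillation it advances by $\Theta_1(c)$ or $\Theta_2(c)$ (the upper and lower halves), with $\Theta_1+\Theta_2=\Theta$. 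The trajectory returns to $x$ exactly when an accumulated azimuthal shift lies in $2\pi\mathbb Z$. Since the covector at an equator crossing is determined by $c$ and the sign of $\xi_t$, any loop at $x$ returns to $\xi$ itself or to its reflection $(-\xi_t,\xi_\phi)$. Two returns therefore give $\xi$ exactly, so every loop direction is periodic and hence recurrent, and $\mathcal R_x=\mathcal L_x$. Thus, up to the finitely many special directions $c=0$ and $c=\pm f_0$ (which I handle separately: the meridian through both poles, and the equator itself), $\xi\in\mathcal R_x$ iff some combination $m\Theta_1(c)+m'\Theta_2(c)$ with $|m-m'|\le1$ lies in $2\pi\mathbb Z$, i.e.\ iff $\Theta(c)/2\pi\in\mathbb Q$ or one of the half-shifts satisfies the analogous rationality condition. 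In all cases this is a countable union of level sets of finitely many functions of $c$.

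\emph{Analytic and non-constant rotation function.} Because $f$ is real-analytic with a nondegenerate maximum, I will show that $\Theta_1,\Theta_2,\Theta$ extend real-analytically in $c$ on $(-f_0,f_0)$, using the standard substitution $f(t)=|c|\cosh$-type or $t=t_\pm(c)$-endpoint desingularisation. As $c\to0$ the geodesic tends to a meridian through both poles, and $\Theta(c)\to 2\pi$ with the half-shifts tending to $\pi$. The main obstacle is the claim that non-Zoll forces $\Theta$ to be non-constant. If $\Theta$ were constant it would equal $2\pi$ by continuity at $c=0$. Then every geodesic through every equator point would close after one oscillation, and by rotational symmetry and the fact that every geodesic except the poles' meridians crosses the equator, all geodesics would be closed with a common period, so $g$ would be Zoll. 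I expect to verify this carefully, including for geodesics through the poles (by a limiting argument), and to also rule out that a half-shift is constant unless $\Theta$ is.

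\emph{Conclusion.} Once non-constancy is established, analyticity implies that each level set $\{\Theta=2\pi q\}$ (and similarly for the half-shifts) is discrete in $(-f_0,f_0)$. Hence $\mathcal R_x$ is countable, and since the Liouville measure on the ellipse $S_x^*M$ is absolutely continuous in $c$, $\mu(\mathcal R_x)=0$. For density, on any open interval $J\subset(-f_0,f_0)$ the function $\Theta$ is continuous and non-constant, so $\Theta(J)$ is a nondegenerate interval. That interval contains points of $2\pi\mathbb Q$, so $J$ contains some $c$ with $\Theta(c)\in2\pi\mathbb Q$, i.e.\ a periodic direction. Both signs of $\xi_t$ occur, so $\mathcal R_x$ is dense in $S^*_xM$.
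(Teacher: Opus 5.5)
Your strategy is the paper's. You encode returns to the equator by a real-analytic rotation function of the initial direction, and use the non-Zoll hypothesis to make it non-constant. You then deduce that the preimage of $2\pi\mathbb Q$ is countable (so $\mu(\mathcal R_x)=0$), and get density from continuity plus density of $\mathbb Q$.

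The difference is which function you use. You parametrize by the Clairaut constant $c$ and work with the full-oscillation shift $\Theta(c)=\Theta_1(c)+\Theta_2(c)$. The paper instead uses the first-return longitude $\Phi(\psi)$ (your $\Theta_1$ or $\Theta_2$, according to the sign of $\xi_t$) and asserts that the geodesic is recurrent iff $\Phi(\psi)\in 2\pi\mathbb Q$. That criterion is exact when $f$ is symmetric about $t_0$, but not in general. Closedness is governed by $\Theta$, so your choice is the right invariant. Your intermediate-value argument for density is also more careful than the paper's appeal to analyticity of $\Phi^{-1}$.

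One step of yours is false, though: the claim that two returns give $\xi$ exactly, so that every loop direction is periodic and $\mathcal R_x=\mathcal L_x$. At $x$, the involution $(\xi_t,\xi_\phi)\mapsto(-\xi_t,\xi_\phi)$ comes from reflection in the meridian through $x$ composed with time reversal. That map conjugates the flow to its inverse rather than commuting with it. Your argument works when $f$ is symmetric about $t_0$, since then $t\mapsto 2t_0-t$ is an isometry commuting with the flow and sending $\xi$ to $\xi'=(-\xi_t,\xi_\phi)$. It fails in general.

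Concretely, suppose $\Theta_1(c)\in 2\pi\mathbb Z$ but $\Theta(c)\notin 2\pi\mathbb Q$. The geodesic returns to $x$ with covector $\xi'$ after one half-oscillation. Its orbit, however, is an irrational linear flow on the Liouville torus $\{|\xi|_g=1,\ \xi_\phi=c\}$, which meets $S^*_xM$ only in $\{\xi,\xi'\}$. Each point is visited at most once, so there are no further returns, and $\xi\in\mathcal L_x\setminus\mathcal R_x$.

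The correct statement is that $\mathcal R_x$ consists exactly of the closed directions: $\{\Theta(c)\in 2\pi\mathbb Q\}$ together with $c=0,\pm f_0$. Your conclusion survives, because you only use $\mathcal R_x\subseteq\mathcal L_x\subseteq$ (union of level sets) for measure zero, and $\{\Theta\in 2\pi\mathbb Q\}\subseteq\mathcal R_x$ for density. With the correct characterization the half-shift sets drop out entirely. If you keep them, what you need is non-constancy of $m\Theta+\Theta_1$ for every $m$, not of $\Theta_1$ alone. This is automatic: $m\Theta+\Theta_1\to(2m+1)\pi$ as $c\to0^+$, so a constant value would never lie in $2\pi\mathbb Z$.

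Three smaller points:
\begin{enumerate}
\item $\Theta$ is odd with $\Theta(0^\pm)=\pm2\pi$. So analyticity, and the continuity used in the density step, hold on $(0,f_0)$ and $(-f_0,0)$ separately, not on $(-f_0,f_0)$.
\item The cleanest route to analyticity is the paper's. For $|c|<f_0$ the equator is crossed transversally ($\xi_t\neq0$), so the return time and return longitude are analytic in the initial direction by the analytic implicit function theorem. No desingularisation of the integral is needed.
\item The common-length issue in the Zoll step, which you flagged, is immediate. The length of one oscillation is $L(c)=2\int_{t_-(c)}^{t_+(c)}f\,(f^2-c^2)^{-1/2}\,dt$. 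It equals $I(c)+c\,\Theta(c)$, where $I(c)=2\int_{t_-(c)}^{t_+(c)}f^{-1}(f^2-c^2)^{1/2}\,dt$ satisfies $I'=-\Theta$. Hence $L'(c)=c\,\Theta'(c)$, and $\Theta\equiv2\pi$ forces all geodesics to be closed with a common length.
\end{enumerate}
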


\begin{rem}
    In this case, $\mathcal{R}_x=\widetilde{\mathcal{L}}_x=\mathcal{L}_x$. Also, the conclusion holds actually for any $x$ that is not a pole. To keep the proof short, we assume that it lies on the equator. 
    
    We can almost produce a one-line proof following \cite[Thm 3.5.24]{Kli95}, which claims that the geodesic flow on a surface of revolution is "equivalent" to certain linear torus action, the slope of which is explicit. However, we opt for an elementary proof as below.
\end{rem}

\begin{figure}[h]
\includegraphics[width=\textwidth]{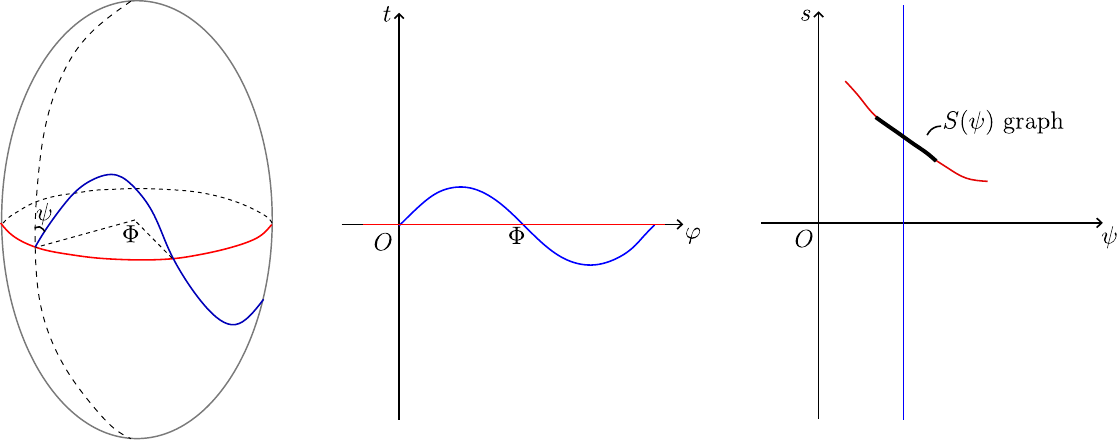}
\caption{The relation of the equator (red) and the geodesic with initial angle $\psi$ (blue) in the proof. 
Left: trajectories on the surface. Middle: canonical coordinates $(t,\phi)$. Right:  geodesic normal coordinates $(s,\psi)$.}
\label{Figure 2}
\end{figure}

\begin{proof}
We invoke the Clairaut relation that holds on every surface of revolution: For a geodesic $l$, the quantity $\gamma=f(t)\sin(\psi(t))$ is a constant along $l$, where $\psi(t)$ is the angle between the geodesic $l$ and the meridian it intersects with at $(t,\phi)$. Write $\psi=\psi(t_0)$ for the initial angle. It is known that the geodesic oscillates within a tubular region near the equator (\cite{Kli95}).

Fix $x$ on the equator. Without loss of generality, let $\phi(x)=0$. For a geodesic $l$ starting at $x$ with angle $\psi$ to the meridian, it is parametrized by arc length so that $l(s)=(t(s),\phi(s))$, $t'(s)^2+f'(t(s))^2\phi'(s)^2=1$. It satisfies the geodesic equations
\begin{equation}
    \begin{cases}
        \frac{d^2t}{ds^2}=f(t(s))\,\frac{df}{ds}\left(\frac{d\phi}{ds}\right)^2\\
        \frac{d}{ds}\left(f^2(t(s))\frac{d\phi}{ds}\right)=0
    \end{cases}
\end{equation}
subject to the initial conditions
\begin{align}
    \phi(0)=0,\quad t(0)=0,\quad \frac{d\phi}{ds}(0)=\frac{\sin\psi}{f(t_0)},\quad \frac{dt}{ds}(0)=\cos\psi.
\end{align}

The only free variable in the initial conditions is $\psi$. Using the fundamental theorem of ODE, one can write a real-analytic function $L:(\psi,s)\mapsto(t(\psi,s),\phi(\psi,s))=(L_1,L_2)$, which serve as a $C^{\omega}$ change-of-coordinates into the geodesic normal coordinates $(\psi,s)$. Then, the equator as a curve $\{(t,\phi):t=0\}$ can be expressed as $\{(\psi,s):s=S(\psi)\}$, a graph of some real-analytic function $S$. Therefore, the function $\Phi(\psi)=L_2(\psi,S(\psi))$ is real-analytic, possibly multi-valued function, which denotes the change of longitude when the geodesic $l$ returns to the equator. We take the smallest positive value as a well-defined branch, and still denote by $\Phi$. Its inverse $\Phi^{-1}$ is also real-analytic in a suitable domain.

The geodesic is recurrent if and only if $\Phi(\psi)$ is a rational multiple of $2\pi$. Let $B:=\{\text{rational multiples of }2\pi\}$. Suppose $\exists A\subset S^1,\, \mu(A)>0$ such that $\Phi(A)= B$. Then $\exists A'\subset A$, $\mu(A')>0$, $\Phi(A')=\{c\}$. By analyticity, $\Phi=const.$ Since $M$ possess rotational symmetry, we get the same constant for every geodesic. It forces the manifold to be a Zoll surface.

By assumtion, the surface of revolution is not Zoll. Then, the inverse image $\Phi^{-1}(B)$ must have zero measure. Denseness follows from the fact that $\Phi^{-1}$ is $C^{\omega}$ and that $B$ is dense.

\end{proof}

We mention a method for constructing a sequence of joint quasimodes from a sequence of eigenfunctions of its horizontal factor. We believe that such a construction also exists in certain Riemannian warped products.

\begin{prop}\label{SOR quasi}
    Let $M$ be a surface of revolution as above with profile function $f$ such that $f\leq 1$, and $\frac{|f'|}{|1-f^2|}$ is bounded where $f=1$. Then there exists an $O(h)$-quasimode $u_h$ such that $|u_h(t_0,\phi)|\simeq h^{-1/4}$ and $|u_h(t,\phi)|\simeq  e^{-C(t,\phi)/h}$ for $f(t)\neq 1$. 
\end{prop}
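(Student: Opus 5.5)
We need to prove Proposition \ref{SOR quasi}: build a joint quasimode concentrated on the circle $\{f=1\}$ with peak $h^{-1/4}$ there. The plan is to separate variables, $u_h(t,\phi)=e^{im\phi}v_h(t)$, with $m\sim h^{-1}$ chosen so that $h m = 1$. Then $u_h$ is an exact eigenfunction of $P_2=-ih\partial_\phi$ with eigenvalue $E_2=hm=1$. The joint eigenvalue problem reduces to a one-dimensional semiclassical problem in $t$:
\begin{equation*}
\Big(-h^2\partial_t^2 - h^2\frac{f'}{f}\partial_t + \frac{1}{f^2(t)}\Big)v_h = E\,v_h .
\end{equation*}
After conjugating by $f^{1/2}$ to remove the first order term (this costs only an $O(h^2)$ potential), this becomes a Schr\"odinger operator $-h^2\partial_t^2+V(t)$ with $V=f^{-2}$. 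Since $f\le 1$, $V\ge 1$, with equality exactly at $t_0$ where $f(t_0)=1$. That point is a critical point, since $f'(t_0)=0$ ($f$ attains its max there). The hypothesis that $|f'|/|1-f^2|$ is bounded near $f=1$ is what we will use to control $V-1$ from below by a multiple of $(t-t_0)^2$ near $t_0$, i.e.\ a nondegenerate well. Together with $f''(t_0)\neq0$ from the standing assumptions, this gives a harmonic bottom $V(t)=1+c(t-t_0)^2+O((t-t_0)^3)$ with $c>0$.

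Next we take $v_h$ to be the harmonic-oscillator ground state (Gaussian) WKB quasimode
\begin{equation*}
v_h(t)=h^{-1/4}\chi(t-t_0)\,e^{-\sqrt{c}(t-t_0)^2/(2h)},
\end{equation*}
possibly corrected to the exact WKB ansatz $h^{-1/4}e^{-\varphi(t)/h}$, where $\varphi$ solves the eikonal equation $(\varphi')^2=V-1$ and $\varphi(t_0)=0$. Away from the well $\varphi>0$, which is exactly the Agmon distance, so we can set $C(t,\phi)=\varphi(t)$. Then we verify three things:
\begin{itemize}
\item[(i)] $\|u_h\|_{L^2}\simeq 1$, using $\int h^{-1/2}e^{-\sqrt c\, s^2/h}\,ds\simeq 1$ and $f\simeq1$ near $t_0$.
\item[(ii)] The quasimode estimate $\|(P_1-E)u_h\|=O(h)$ with $E=1+O(h)$. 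The Gaussian kills the quadratic part of $V$ up to a term of size $\sqrt c\,h$, which we absorb into $E$. The cubic Taylor remainder gives $|t-t_0|^3\sim h^{3/2}$ on the support, and the cutoff errors are $O(e^{-c/h})$.
\item[(iii)] The pointwise values: $|u_h(t_0,\phi)|=h^{-1/4}$, and $e^{-\varphi/h}$ decay elsewhere.
\end{itemize}
For $t$ outside the cutoff support we simply get $0$, which is even smaller. Alternatively we take $\chi$ equal to $1$ on the whole region between the poles where the eikonal solution is smooth; smoothness at the poles is fine because $V\to\infty$ there.

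The main obstacle is the quantization detail at the level of $\sqrt{-h^2\Delta}$ versus $-h^2\Delta$, together with the need for $hm=E_2$ to be an integer multiple of $h$. We handle this by restricting to the sequence $h=1/m$, and we use that an $O(h)$ quasimode for $-h^2\Delta-E$ with $E$ near $1$ is also an $O(h)$ quasimode for $\sqrt{-h^2\Delta}-\sqrt E$ by functional calculus. The other point requiring care is to check precisely that the boundedness of $|f'|/|1-f^2|$ guarantees $\varphi$ is well defined and nonnegative, with $V-1$ vanishing only at $t_0$. This matters because it ensures the decay $e^{-C/h}$ is genuinely exponential wherever $f(t)\neq1$.
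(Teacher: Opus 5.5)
Your construction is correct, and its core is the paper's. You separate variables with $e^{i\lambda\phi}$ and $h=\lambda^{-1}$, so the $P_2$ equation holds exactly, and you take the profile $\lambda^{1/4}e^{-\lambda\varphi(t)}$ with $(\varphi')^2=f^{-2}-1$. That profile is the paper's ``highest-weight quasimode'', i.e.\ your ``exact WKB'' variant.

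The paper neither conjugates to Schr\"odinger form nor Taylor expands. It computes directly $\Delta u_\lambda=\big(-\lambda^2-\lambda f'/\sqrt{1-f^2}\big)u_\lambda$. For the branch that decays on both sides of $t_0$, namely $\varphi'=\mathrm{sgn}(t-t_0)\sqrt{1-f^2}/f$ as in your normalization, the multiplier becomes $+\lambda\,\mathrm{sgn}(t-t_0)f'/\sqrt{1-f^2}$. Hence $(-h^2\Delta-1)u_h$ is $h$ times a bounded function times $u_h$. This single global identity gives both the $O(h)$ bound and the two-sided decay $\simeq e^{-\varphi(t)/h}$ on all of $\{f\neq 1\}$.

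Your Gaussian is the quadratic truncation of the same object. It needs nothing beyond $f''(t_0)\neq 0$, and it even gives an $O(h^{3/2})$ error at the shifted energy $1+\sqrt{c}\,h$. This is consistent with the paper, since $\mathrm{sgn}(t-t_0)f'/\sqrt{1-f^2}\to-\sqrt{c}$ at $t_0$. But beyond the cutoff the Gaussian gives $0$, so if you want $\simeq e^{-C/h}$ in the two-sided sense you need the global ansatz. That ansatz is smooth at the poles for a different reason than the one you give. In terms of the distance $\rho$ to a pole it behaves like $(\rho e^{i\phi})^\lambda$ times a smooth function of $\rho^2$ (compare $(\cos t)^\lambda e^{i\lambda\phi}$ on the sphere); $V\to\infty$ is not the reason.

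One correction to your bookkeeping: the hypothesis on $|f'|/|1-f^2|$ does not produce the nondegenerate well, nor does it make $\varphi$ well defined.
\begin{itemize}
\item The expansion $V=1+c(t-t_0)^2+O(|t-t_0|^3)$ with $c=-f''(t_0)>0$ comes from $f\le 1$ and $f''(t_0)\neq 0$ alone. So does the smoothness of $\mathrm{sgn}(t-t_0)\sqrt{f^{-2}-1}$ across $t_0$.
\item $V-1>0$ away from $t_0$ because $t_0$ is the only critical point of $f$ on $(t_-,t_+)$.
\item Read literally, $|f'|\le K(1-f^2)$ near $t_0$ would force $f\equiv 1$ near $t_0$ (apply Gronwall to $1-f$), contradicting $f''(t_0)\neq 0$.
\end{itemize}
The quantity that actually matters is $|f'|/\sqrt{1-f^2}$. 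It is exactly the multiplier in the paper's error term, and it is automatically bounded under the standing nondegeneracy assumption, since it tends to $\sqrt{c}$ at $t_0$.
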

\begin{proof}
    Separation of variables: Let $u_{\lambda}(t,\phi)=T(t)e^{i\lambda\phi}$. We call the following \textbf{highest-weight quasimode}: 
\begin{equation}\label{trial}
    u_{\lambda}(t,\phi)=\lambda^{1/4}\exp\{\lambda\int\sqrt{\frac{1-f^2}{f^2}}dt\}\,e^{i\lambda \phi}.
\end{equation}
One can check by direct computation that 
\begin{align}
\Delta u_{\lambda}&=\Big(\partial_t^2+f^{-2}(t)\partial_\phi^2+\frac{f'(t)}{f(t)}\partial_t\Big)u_{\lambda} \nonumber\\
&=\Big(-\lambda^2-\lambda\frac{f'(t)}{f(t)^2\sqrt{1-f(t)^2}}+\lambda\frac{f'(t)\sqrt{1-f(t)^2}}{f(t)^2}\Big)u_{\lambda}\nonumber\\
&=\Big(-\lambda^2-\frac{\lambda f'(t)}{\sqrt{1-f(t)^2}}\Big)u_{\lambda}
\end{align}
At points where $f(t_0)=1$, we expect that on the equator $f'(t_0)=0$ at correct order s.t. the second term does not blow up. It is guaranteed by our assumption.

\end{proof}

\begin{rem}
Back to the round sphere, we let $f=\cos t$, then 
$$
\exp\{\lambda\int\sqrt{\frac{|1-f^2|}{f^2}}dt\}=\exp\{\lambda\int |\tan t|dt\}=\exp\{\lambda \ln|\cos t|\}=(\cos t)^{\lambda}. 
$$
(in this convention, $t\in [-\pi/2,\pi/2]$, so $\cos t>0$.)
\end{rem}

Combining the Proposition \ref{SOR recur} and \ref{SOR quasi}, we exemplified Remark (1) of Theorem \ref{main1}, and partly justified the necessity of the rank $k$ condition of nondegeneracy.

Finally, we mention that most results in Section \ref{section4} can be generalized to Riemannian warped products $I \times_fN$. It seems interesting to explore their utilities as (counter-)examples in higher dimensions.

The following examples are inspired by \cite{GT2020}.

\subsection{Laplacians on Liouville tori}

Consider the two-torus $M=\mathbb{R}^2/\mathbb{Z}^2$ with a Liouville metirc given by $g=(a(x_1)+b(x_2))(dx_1^2+dx_2^2)$, here $a, b:\mathbb{R}/\mathbb{Z}\to \mathbb{R}^+$ are two, smooth, positive periodic functions with $\min_{0\leq x_1\leq1} a(x_1)>\max_{0\leq x_2 \leq 1}b(x_2)$. The associated Laplacian $$P_1(h)=-[a(x_1)+b(x_2)]^{-1}((h\partial_{x_1})^2+(h\partial_{x_2})^2)$$ is QCI with $$P_2(h)=-[a(x_1)+b(x_2)]^{-1}(b(x_2)(h\partial_{x_1})^2-a(x_1)(h\partial_{x_2})^2).$$

Taking any point $z_0=(x_0, y_0)\in\mathbb{R}^2/\mathbb{Z}^2$ with $\alpha=a(x_0)>b(y_0)=\beta$ we have that $$p_2\vline_{\,T_{z_0}^*}=\beta(\alpha+\beta)^{-1}\xi^2- \alpha(\alpha+\beta)^{-1}\eta^2.$$
Since $S_{z_0}^*=\{(\xi,\eta);\, \xi^2+\eta^2=\alpha+\beta>0\}$, one sets $\xi=\sqrt{\alpha+\beta}\cos\theta$ and $\eta=\sqrt{\alpha+\beta}\sin\theta$. Hence $$p_2\vline_{\,S_{z_0}^*}(\theta)=\beta\cos^2\theta-\alpha\sin^2\theta,$$
which is critical at $\theta=\theta_k=k\pi/2$, $k=0,1,2,3$. One can find the corresponding energy $E_2^k:=p_2(\theta_k)$. Hence {\em rank 1 condition} is valid if $(1, E_2)\in \mathcal{B}\backslash\{(1,E_2^k)\}$, here $\mathcal{B}$ is the set of regular values of the moment map $\mathcal{P}$.

\subsection{Laplacians on ellipsoids}
Consider the ellipsoid $\mathcal{E}=\{x\in \mathbb{R}^3; \sum_{j=1}^3\frac{x^2_j}{a^2_j}=1\}$ where ${0<a_3<a_2<a_1}$ are fixed constants. It is QCI with reduced Hamiltonian $H$ and the reduced integral in involution $P$ which are given by the formulas \cite{Tot97}:
\begin{align*}
H&=a_3(x_1\xi_2-\xi_1x_2)^2+a_2(x_1\xi_3-x_3\xi_1)^2+a_1(x_3\xi_2-x_2\xi_3)^2,\\
P&=(x_1\xi_2-\xi_1x_2)^2+(x_1\xi_3-x_3\xi_1)^2+(x_3\xi_2-x_2\xi_3)^2.
\end{align*}
Here $P$ is equal to the principle symbol of the standard Laplacian on $S^2$.
Firstly, we consider the point  $(x_0, \xi_0)$  on the hyperbolic geodesics $\Gamma^{\pm}=\{(x,\xi)\in S^*(S^2); x_2=\xi_2=0\}$. One has that
\begin{align*}
H\vline_{\,\Gamma^\pm}&=a_2(x_1\xi_3-x_3\xi_1)^2,\\
P\vline_{\,\Gamma^\pm}&=(x_1\xi_3-x_3\xi_1)^2=\frac{1}{a_2}  H |_{\,\Gamma^\pm}.
\end{align*}
Hence {\em rank k condition} fails. As \cite{Tot97} points out that one can find a sequence of eigenfunctions $\{u_h\}$ such that $\|u_h\|_{L^\infty(\Gamma(h))}=O(h^{1/2}/|\ln h|)$, here $\Gamma(h)$ is a tube of width $O(h)$ about a hyperbolic geodesic $\Gamma$.

Now we consider the point $x_0$ away $\pi(\Gamma)$ where $\pi$ is the standard projection map. Denote the rectangles $R_+=(0, T_1)\times (0, T_2)$ and $R_-=(T_1, 2T_2)\times(0, T_2)$. We let $\Phi_\pm: R_\pm\to \mathcal{E}\cap \{w_2\neq 0\}$ be the conformal mapping sending vertices of $R_\pm$ to the four umbilic points $p_k; k=1,\dots 4$ of $\mathcal{E}$. We choose orientations so that $\Phi_+(x,T_2)=\Phi_-(2T_1-x,T_2)$ and $\Phi_+(x,0)=\Phi_-(2T_1-x,0)$. We henceforth let $\Phi:=\Phi_\pm: R\to \mathcal{E}$ denote the induced conformal mapping with $\Phi\vline_{R_\pm}=\Phi_{\pm}$ and $R:=R_+\cup R_-$. One has that the intrinsic Riemannian metric $\mathcal{E}$ pulled-back to $R$ is locally of Liouville form \cite{GT2020} $$ds^2=\big(a(x_1)+b(x_2)\big)(dx_1^2+dx_2^2).$$

Then the same argument works as in the Liouville torus. For the specific joint eigenvalues $\{(1, E_2^k)\}$ whose corresponding geodesics including ones that connect $x_0$ and the four umbilic points and have a focusing effect which are forward and backwards asymptotic to the middle-length ellipse \cite{Tot97, GT2020}, the {\em rank k condition fails}. However {\em rank 1 condition} is valid if $(1, E_2)\in \mathcal{B}\backslash\{(1,E_2^k)\}$ which is consistant with the result of \cite{Tot97} which shows that there exist a sequence of eigenfunctions $\{u_h\}$ such that $\|u_h\|_{L^\infty}=O(1/|\ln h|)$ outside an arbitrarily small (but fixed) neighbourhood of $\Gamma$.

\bigskip
\bibliography{reference}
\bibliographystyle{alpha}

\end{document}